\author{Petr Kosenko}
\title{On a complex-analytic approach to stationary measures on $S^1$ with respect to the action of $PSU(1,1)$}
\newtheorem{definition}{Definition}[section]
\newtheorem{theorem}{Theorem}[section]
\newtheorem{proposition}{Proposition}[section]
\newtheorem{corollary}{Corollary}[section]
\newtheorem{lemma}{Lemma}[section]
\newtheorem{example}{Example}[section]
\newtheorem{conjecture}{Conjecture}[section]
\newcommand{\Addresses}{{
		\bigskip
		\footnotesize
		\noindent
		\textbf{Petr Kosenko}, \\
		Department of Mathematics, University of British Columbia, 1984 Mathematics Road, Vancouver, BC, Canada, V6T 1Z2 \\	
		\textit{E-mail:} \texttt{pkosenko@math.ubc.ca} \\
		\textit{ORCID ID:} https://orcid.org/0000-0002-4150-0613
}}
\begin{document}

\maketitle

\begin{abstract}
We provide a complex-analytic approach to the classification of stationary probability measures on $S^1$ with respect to the action of $PSU(1,1)$ on the unit circle via M\"{o}bius transformations by studying their Cauchy transforms from the perspective of generalized analytic continuation. We improve upon results of Bourgain and present a complete characterization of Furstenberg measures for Fuchsian groups of first kind via the Brown-Shields-Zeller theorem.
\end{abstract}

\section{Introduction}

\subsection{Background}

One of the most important notions in dynamical systems is of an \textbf{invariant measure}: given a topological space $X$ and a self-map $T : X \rightarrow X$, one can study Borel measures $\mu \in Bor(X)$ on $X$ which satisfy
\begin{equation}
	(T_* \mu)(A) = \mu(T^{-1}(A)) = \mu(A).
\end{equation}

This notion works quite well when provided with a single map $X \rightarrow X$. However, one often encounters nice spaces equipped with group actions $\Gamma \curvearrowright X$, and it is entirely possible that there are no measures invariant with respect to every element $\gamma \in \Gamma$.

Nevertheless, there is a natural weakening of the above definition, requiring a measure to be invariant ``on average''.

\begin{definition}
	Consider a group action $\Gamma \curvearrowright X$. Let $\nu$ be a Borel probability measure on $X$. Given a Borel measure $\mu$ on $\Gamma$, we say that $\nu$ is \textbf{$\mu$-stationary} (with respect to the action) if
	\begin{equation}
		\label{intro: stationary measure}
		\nu = \int_\Gamma  \gamma_* \nu d \mu(\gamma).
	\end{equation}
\end{definition}

It is easy to see that any invariant measure is stationary with respect to any probability measure on $\Gamma$, but the inverse is, of course, not true. Being a $\mu$-stationary measure is, evidently, a much weaker condition. Stationary measures exist in very general settings, unlike invariant ones, but they are no less important, as they are closely related to the structure of harmonic functions and Poisson boundaries, with applications to the long-term dynamics of random walks on groups. 

Given an admissible random walk $(X_n)$ on a non-elementary discrete subgroup $\Gamma \subset PSU(1,1)$ with a finite first moment, we know (due to Furstenberg (\cite{furstenberg71}) and Kaimanovich (\cite{kaimanovich2000poisson})) that $(X_n)$ converges to the Poisson boundary almost surely. The respective pushforward of the resulting measure to $S^1$ via the identification $\partial \Gamma \simeq S^1$ with the Gromov boundary yields a (unique) $\mu$-stationary measure $\nu_\mu$ with respect to the action of $\Gamma$, called the \textbf{hitting measure} of the random walk. A big open problem in measured group theory is to understand when hitting measures are singular or absolutely continuous with respect to the Lebesgue measure on $S^1$. In particular, recall the Kaimanovich-le Prince's singularity conjecture:

\begin{conjecture}[\cite{kaimanovich2011matrix}]
	\label{Fuchsian singularity conjecture}
	For every finite-range admissible random walk $(X_n)$ generated by a probability measure $\mu$ on a cocompact Fuchsian group $\Gamma$, the hitting measure $\nu_\mu$ is singular with respect to the Lebesgue measure on $S^1 \simeq \partial \Gamma$.
\end{conjecture}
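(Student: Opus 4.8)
Since Conjecture~\ref{Fuchsian singularity conjecture} is, at the time of writing, open in full generality, what follows is the line of attack that a complex-analytic treatment of hitting measures suggests, together with an indication of where the essential difficulty lies; it is a program rather than a finished proof. The plan is to recast absolute continuity of the hitting measure as a statement about generalized analytic continuation and then to preclude it using the $\Gamma$-equivariance that stationarity imposes. First I would attach to $\nu=\nu_\mu$ its interior Cauchy transform $C(z)=\int_{S^1}(1-\overline{\xi}z)^{-1}\,d\nu(\xi)$, holomorphic on $\mathbb{D}$, and its exterior counterpart $\widetilde{C}(z)=\int_{S^1}(1-\overline{\xi}z)^{-1}\,d\nu(\xi)$, holomorphic on $\widehat{\mathbb{C}}\setminus\overline{\mathbb{D}}$ with $\widetilde{C}(\infty)=0$. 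Since $\nu$ is a finite positive measure, $C$ has positive real part, hence lies in $H^{p}$ for every $p<1$ and in particular in the Smirnov class, so it has nontangential boundary values a.e.\ on $S^1$; the same holds for $\widetilde{C}$ on the exterior disk. The Sokhotski--Plemelj jump $C^{+}-\widetilde{C}^{-}$ equals, up to a fixed constant, the density of the absolutely continuous part of $\nu$. Therefore $\nu_\mu\perp m$ if and only if $C^{+}=\widetilde{C}^{-}$ a.e.\ on $S^1$, i.e.\ if and only if $\widetilde{C}$ is a pseudocontinuation of $C$ across the circle. This is the functional-analytic criterion for singularity, and the conjecture becomes: for every finite-range admissible $\mu$ on a cocompact Fuchsian $\Gamma$, the two Cauchy transforms of $\nu_\mu$ are pseudocontinuations of one another.

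Next I would exploit that every $g\in PSU(1,1)$ extends to a biholomorphism of $\widehat{\mathbb{C}}$ preserving $S^1$, $\mathbb{D}$ and $\widehat{\mathbb{C}}\setminus\overline{\mathbb{D}}$. A change of variables in $\nu=\sum_{g}\mu(g)\,g_*\nu$, using the cocycle identity for the Cauchy kernel and that $\nu$ is a probability measure, turns stationarity into the finite linear functional equation
\begin{equation}
 C(z)=\sum_{g\in\operatorname{supp}\mu}\mu(g)\bigl(R_g(z)\,C(g^{-1}z)-s_g(z)\bigr),\qquad z\in\mathbb{D},
\end{equation}
where $R_g$ is the rational function that restricts on $S^1$ to the conformal derivative $|(g^{-1})'|$ and $s_g$ is an explicit rational correction term (its appearance encodes that $\nu$ has mass $1$); finiteness of $\operatorname{supp}\mu$ is exactly what makes this a finite equation with rational coefficients, and the identical equation governs $\widetilde{C}$ on $\widehat{\mathbb{C}}\setminus\overline{\mathbb{D}}$. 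This transfer-operator-type identity, combined with the uniformly bounded conformal distortion of a cocompact Fuchsian group on shadows and with its limit set being all of $S^1$, is what I would use to pin down the boundary regularity of the pair $(C,\widetilde{C})$ in terms of the derivative cocycle $g\mapsto|(g^{-1})'(\cdot)|$, equivalently in terms of hyperbolic displacement in $\Gamma$.

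The contradiction would then run as follows. By ergodicity of the $\Gamma$-action on $(S^1,m)$ and on $(S^1,\nu_\mu)$, the hitting measure is either mutually singular or mutually absolutely continuous with $m$, so it suffices to exclude the alternative $\nu_\mu\sim m$. Under that hypothesis the density $f=d\nu_\mu/dm$ is a positive $L^1$ fixed point of the transfer operator $\mathcal{P}_\mu f=\sum_{g}\mu(g)\,|(g^{-1})'|\,(f\circ g^{-1})$, the Radon--Nikodym cocycle of $\nu_\mu$ along the $\Gamma$-action is a conformal-derivative cocycle modulo the coboundary of $\log f$, and Furstenberg's entropy formula --- which the Poisson and Cauchy transforms of $\nu_\mu$ make explicit --- collapses, after the substitution $\mathcal{P}_\mu f=f$, to $h_\mu=-\int_\Gamma\int_{S^1}\log|g'(\xi)|\,d\nu_\mu(\xi)\,d\mu(g)$, the integral of the conformal-derivative cocycle against $\nu_\mu$ and $\mu$. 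Together with the Hausdorff-dimension identity $\dim\nu_\mu=h_\mu/\ell_\mu$ for hyperbolic groups (Blach\`{e}re--Ha\"{\i}ssinsky--Mathieu) and with $\dim m=v_\Gamma=1$, this forces the Guivarc'h fundamental inequality $h_\mu\le\ell_\mu\,v_\Gamma$ to hold with equality; on the analytic side, the same degeneracy is exactly the statement that the pseudocontinuation of $C$ is genuine, which translates into the Green metric of the walk being roughly proportional to the hyperbolic metric on an orbit $\Gamma o$.

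The main obstacle --- and the precise point at which the conjecture is still open --- is this last comparison: one must rule out, for \emph{every} finitely supported admissible step distribution, that the Green metric of the walk is roughly proportional on $\Gamma o$ to the hyperbolic metric (equivalently, that equality holds in the fundamental inequality). The complex-analytic reformulation is what makes this obstruction concrete and, I expect, tractable, reducing singularity to the failure of a pseudocontinuation and, via the functional equation, to explicit estimates on weighted compositions of Cauchy transforms; but quantifying the gap between the Green metric of an arbitrary finite-range walk and the hyperbolic metric, uniformly over all cocompact Fuchsian groups, requires genuinely new input relating the combinatorics of the Cayley graph to the conformal geometry of $S^1$. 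The special cases in which the step distribution is adapted to a word metric or to a geometric generating set --- where this comparison can be performed directly --- are obtained in this way, and the functional-equation control above plausibly extends further; but the full conjecture is not settled by this approach alone.
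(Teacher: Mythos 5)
The statement you were asked about is not a theorem of the paper: it is the Kaimanovich--Le Prince singularity conjecture, which the paper records precisely as a conjecture and explicitly describes as widely open (the paper's own contribution is a necessary condition for stationarity and various corollaries, not a resolution of the conjecture). So there is no proof in the paper to compare yours against, and your submission --- which candidly presents itself as a program rather than a proof --- cannot be assessed as a correct proof of the statement. Concretely, the gap you yourself identify is the real one: after reformulating singularity via the Cauchy transform and pseudocontinuation (this part is sound and is essentially the paper's framework: the Aleksandrov criterion, the Smirnov-class membership, and a stationarity functional equation with rational data), everything hinges on excluding, for \emph{every} finite-range admissible $\mu$, the degenerate alternative (equality in the fundamental inequality, equivalently $\nu_\mu \sim m$), and no argument is supplied for that step. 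Your second half also quietly changes method: the entropy/dimension/Green-metric route (Guivarc'h inequality, Blach\`ere--Ha\"issinsky--Mathieu) is exactly the geometric approach that the paper's introduction says is insufficient to settle the conjecture, and the asserted equivalence between ``equality in the fundamental inequality'' and ``the pseudocontinuation of $C$ is genuine'' is not justified anywhere.

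Two smaller technical points if you develop this further. First, in the paper's functional equation the weight is the holomorphic derivative $(\gamma^{-1})'(z)$ and the inhomogeneous term is exactly $\int_G \frac{d\mu(\gamma)}{z-\gamma.\infty}$; your $R_g$, described as a rational extension of $|(g^{-1})'|$, is not that object (the modulus of the derivative is not the boundary value of a single rational function in $z$ alone), so the equation as you wrote it needs to be restated before any estimate can be run on it. Second, the equivalence ``$\nu_\mu \perp m$ iff $C^{+}=\widetilde{C}^{-}$ a.e.'' is correct (it is item 2 of the Aleksandrov theorem quoted in the paper), but note it concerns the transform of $\nu_\mu$, about which stationarity gives only the inhomogeneous equation above; the paper stresses that this is a necessary condition only, so even a complete analysis of its solution space would not by itself decide singularity of the hitting measure.
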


This conjecture is known to hold for non-cocompact lattices due to \cite{guivarch1990}, and the author's thesis \cite{mythesis} provides affirmative results for nearest-neighbour random walks on cocompact Fuchsian groups, but the conjecture is still widely open, as we did show that even the recently developed geometric ideas are insufficient to completely settle the conjecture.

In our paper we will study the actions of subgroups $\Gamma \subset G = PSU(1,1)$ induced by the action of $G$ on $S^1$ via hyperbolic isometries, or, more concretely, M\"{o}bius transformations. We aim to present a complex-analytic framework which, as we believe, can unify the majority existing results about stationary measures on $S^1$ with respect to the action of $\Gamma$ and probability measures satisfying a finite first moment condition. Keep in mind that the analysis will be different depending on several factors:

\begin{itemize}
	\item Whether $\mu$ has finite support or not,
	\item If $\mu$ is infinitely supported, the moment conditions on $\mu$ will matter (first moment, exponential moment, superexponential moment, and so on...),
	\item Whether the subgroup of $G$ generated by the support of $\mu$ is discrete or not,
	\item If the generated subgroup is discrete, whether it is of first or second type,
	\item And, finally, if it is of first kind, whether it is cocompact or not.
\end{itemize}

First results about stationary measures and Poisson boundaries for discrete subgroups of $SL_n(\mathbb{R})$ were established by Furstenberg in \cite{furstenberg1963noncommuting}. In particular, the question of when the Lebesgue measure is $\mu$-stationary was first studied by Furstenberg as well in \cite{furstenberg71}. Pure Fourier-like approaches were independently demonstrated by \cite{Bourgain2012} and \cite{MR2969625}, which allow us to study $\mu$-stationary measures for \textbf{dense} subgroups of $PSU(1,1)$. However, their methods do not apply for discrete groups and are quite delicate with respect to the initial data, requiring complicated number-theoretic and analytic methods to properly apply. There have been multiple independently developed improvements to Bourgain's approach, see \cite{lequen2022absolutely} and \cite{kogler2022locallimittheoremrandom} for latest examples, but they still do not apply to the discrete case and non-finite supports. We also want to mention \cite{kittle2023absolutely}, which provides an entirely different analytic framework to study stationary measures, allowing us to consider stationary measures with continuous but not necessarily differentiable densities. Finally, we want to mention recent attempts to understand the structure of harmonic and Patterson-Sullivan measures using thermodynamic formalism, for example, \cite{garcía2023dimension} and \cite{cantrell2022invariant}. Once again, these approaches are not universal, as Garc\`{i}a-Lessa's paper does not generalize to first-kind Fuchsian groups, and the thermodynamic approach of Cantrell-Tanaka provides considerably more information for Patterson-Sullivan measures than harmonic measures.

As one can see, up until now there was no single method which unified all above settings, and until very recently, the general consensus was that no such method should have exist, in light of the incredible variety of techniques used to study different settings. 

\subsection{Main results}

Inspired by the standard techniques used to study affine self-similar measures on $\mathbb{R}^n$, and their respective Parseval frames, papers of R.S.Strichartz (see \cite{strichartzI} and sequels), together with \cite{denseanalytic} and more recent papers of E.Weber and J.Herr \cite{weber2017paleywiener} and \cite{axioms6020007}, we have developed a promising approach which, in theory, could unify many standard results about stationary measures on $S^1$ with respect to the action of $PSU(1,1)$. The idea is to consider an appropriate integral transform on $S^1$ which respects the action of $PSU(1,1)$ and ``preserves'' \eqref{intro: stationary measure}. The Fourier transform is known to not respect this action, as the resulting exponential terms $e^{(az +b) / (cz+d)}$ are difficult to control. The Helgason-Fourier transform seems to be a better candidate, but integrating the powers of the Poisson kernel $(z, \xi) \mapsto \left( \frac{1 - |z|^2}{|z - \xi|^2}\right)^{\lambda i + 1}$ against a stationary measure does not actually preserve \eqref{intro: stationary measure} in a way we want. Essentially, given a $\mu$-stationary measure $\nu$ on $S^1$, one can easily check that the resulting smooth eigenvector of the hyperbolic Laplacian
\[
\psi(z) := \frac{1}{2\pi} \int_{0}^{2\pi} \left( \frac{1 - |z|^2}{|z - e^{i t}|^2}\right)^{\lambda i + 1} d\nu(t)
\]
does not immediately exhibit any nice properties with respect to the action of $PSU(1,1)$, unlike what we see for Patterson-Sullivan measures. Nevertheless, see Section \ref{Criterion for the singularity of the harmonic measure} for a criterion for verifying singularity using harmonic functions on $\mathbb{D}$.

However, replacing the Poisson kernel with its logarithm, which is closely related to the Busemann cocycle, does the trick, turning a multiplicative relation into an additive one. The resulting functional equation \eqref{intro: functional equation} serves as a proper holomorphic version of \eqref{intro: stationary measure}, and, in a way, it allows us to change the perspective, as we shift from the measurable setting to a holomorphic one on $\mathbb{D}$, granting access to the powerful complex-analytic machinery.

\textbf{Remark.} Before stating our results, we would like to point out a standard reduction: it is sufficient to study pure $\mu$-stationary measures due to the fact that the action of $PSU(1,1)$ respects the Lebesgue decomposition.

In order to formulate the main result, we need the definition of the Cauchy transform:
\[
f_\nu(z) := \frac{1}{2\pi} \int_{0}^{2\pi} \dfrac{d\nu(t)}{e^{it} - z}.
\]

\begin{theorem}
	\label{T:main result}
	Let $\mu$ be a probability measure on $G=PSU(1,1)$.
	\begin{itemize}
		\item If a probability measure $\nu$ on $S^1$ is $\mu$-stationary, then
		\begin{equation}
			\label{intro: functional equation}
			\int_G f_\nu(\gamma^{-1}.z)(\gamma^{-1})'(z)  d\mu(\gamma) - f_\nu(z) = \int_G \frac{d \mu(\gamma)}{z - \gamma.\infty},
		\end{equation}
		for every $z \in \mathbb{D}$. 
		\item If, in addition, the pair $(S^1, \nu)$ is the model for the Poisson boundary of $(\Gamma, \mu)$, then a probability measure $\nu$ on $S^1$ is $\mu$-stationary \textbf{if and only if} \eqref{intro: functional equation} holds for all $z \in \mathbb{D}$.
	\end{itemize}
\end{theorem}

The power of this theorem lies in the fact that we managed to successfully transform a measurable functional equation on the circle into a holomorphic condition on the unit disk, which allows us to make use of powerful complex-analytic techniques.

Evidently, we are able to extract the most amount of information from \eqref{intro: functional equation} for countably supported probability measures $\mu$.

\begin{corollary}
	\label{C:main corollary}
	Let $\mu$ be a countably supported probability measure on $PSU(1,1)$.
	\begin{enumerate}
		\item If $\mu$ has finite support, and there is an element $\gamma \in \text{supp} \, \mu$ with $\gamma.0 \ne 0$, then there are no entire solutions to \eqref{intro: functional equation}. In particular, there are no $\mu$-stationary measures with the Fourier series $\nu \sim \sum_{k \in \mathbb{Z}} a_k e^{i k t}$ with $\limsup_{n \rightarrow \infty} |a_k|^{1/k} = 0.$
		\item Assume that $\limsup\limits_{n \rightarrow \infty} \left\| \int_G \dfrac{d \mu^{*n}(\gamma)}{z - \gamma.\infty} \right\|_1 = \infty$, where $||\cdot||_1$ stands for the norm in $H^1(\mathbb{D})$. Then there are no $\mu$-stationary measures with $L^{1+\varepsilon}(S^1, Leb)$-density for any $\varepsilon > 0$.
		\item Suppose that the subgroup $\Gamma \leq G$ generated by the support of $\mu$ is a non-elementary lattice, $(S^1, \nu)$ is a model for the Poisson boundary of $(G, \mu).$ Denoting the bounded harmonic function which represents the identity function on $S^1$ by $\lambda$, we show that the following are equivalent:
		\begin{itemize}
			\item $\sup_{\gamma \in \Gamma} \frac{|\lambda(\gamma) - \gamma.0|}{|\gamma'(0)|} < \infty$
			\item $f_\nu(z) \in H^\infty(\mathbb{D})$.
		\end{itemize}
	\end{enumerate}
\end{corollary}

Equation \eqref{intro: functional equation} gives us quite a lot of insight into the measures $\mu$ for which the (normalized) Lebesgue measure is $\mu$-stationary.

\begin{corollary}
	\label{intro: Lebesgue is stationary}
	Let $\mu$ be a finite Borel measure supported on a discrete subgroup $\Gamma \subset PSU(1,1)$ with a finite first moment. Let's call a measure $\mu$ on $PSU(1,1)$  the \textbf{Furstenberg measure} if the normalized Lebesgue measure on $S^1$ is $\mu$-stationary.
	\begin{enumerate}
		\item The measure $\mu$ is a Furstenberg measure if and only if
		\begin{equation}
			\int_G \frac{d \mu(\gamma)}{z - \gamma.\infty} = 0, \quad |z| < 1.
		\end{equation}
		\item If $\mu$ is a Furstenberg measure, then
		\[
		\limsup_{n \rightarrow \infty} |\mu(\gamma_n)|^{1/n} = 1.
		\]
		\item (Brown-Shields-Zeller) Suppose $\mu$ is a Furstenberg measure. Then $\{\gamma.0\}_{\gamma \in \text{supp} \, \mu}$ is non-tangentially dense in $\mathbb{T}$, which means that $Leb$-\textbf{almost every} point $\xi \in \mathbb{T}$ can be approached by a subsequence $\gamma_n.0$ inside a Stolz angle $\{ z \in \mathbb{D} : \frac{|z - \xi|}{1 - |z|} < \alpha \}$ for some $\alpha > 1$. As a corollary from \cite[Remark 2]{brownsums}, we get
		\[
		\sum_{\gamma \in \text{supp}\, \mu} (1 - |\gamma.0|) = \infty.
		\]
	\end{enumerate}
\end{corollary}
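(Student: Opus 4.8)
The plan is to derive all three parts of Corollary~\ref{intro: Lebesgue is stationary} from the functional equation~\eqref{intro: functional equation} applied to the specific case $\nu = m$. First I would compute the Cauchy transform of Lebesgue measure: since $\frac{1}{2\pi}\int_0^{2\pi} \frac{d t}{e^{it}-z} = 0$ for $|z| < 1$ (the integrand has no pole inside $\mathbb{D}$ when we expand $\frac{1}{e^{it}-z} = e^{-it}\sum_{k\ge 0} z^k e^{-ikt}$ and integrate term by term), we get $f_m \equiv 0$ on $\mathbb{D}$. Substituting $f_\nu = f_m = 0$ into~\eqref{intro: functional equation} immediately kills the left-hand side and yields part~(1): $\int_G \frac{d\mu(\gamma)}{z-\gamma.\infty} = 0$ for all $|z| < 1$. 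This is the clean starting point; everything else is a consequence of this identity together with the fact that $m$ being $\mu$-stationary forces $m$ to be $\mu^{*n}$-stationary for every $n$ (convolution powers preserve stationarity), so the same identity holds with $\mu$ replaced by $\mu^{*n}$.

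For part~(2), I would use the vanishing of $g_\mu(z) := \int_G \frac{d\mu(\gamma)}{z-\gamma.\infty}$ on $\mathbb{D}$ to control the decay of the coefficients $\mu(\gamma_n)$. The key observation is that $\gamma.\infty$ lies outside $\overline{\mathbb{D}}$ for $\gamma \in PSU(1,1)$ (since $\gamma$ preserves $\mathbb{D}$ and hence maps $\infty$ to the exterior), in fact $|\gamma.\infty| = 1/|\gamma.0| \geq 1$, so each summand $\frac{1}{z-\gamma.\infty}$ is holomorphic on a disk of radius $|\gamma.\infty| > 1$ and expands as $-\sum_{k\ge 0} z^k / (\gamma.\infty)^{k+1}$. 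Summing over $\gamma$ (justified by the finite first moment, which controls $\sum |\mu(\gamma)|$ and the relevant tail) and using that $g_\mu \equiv 0$, every Taylor coefficient vanishes: $\sum_{\gamma} \mu(\gamma)/(\gamma.\infty)^{k+1} = 0$ for all $k \geq 0$. If $\limsup_n |\mu(\gamma_n)|^{1/n} = r < 1$, one should be able to run a normal-families / Vitali argument — or directly compare growth rates — on the convolution powers $\mu^{*n}$, whose analogous identities must also hold, to force a contradiction with the fact that the total mass is preserved and the walk genuinely spreads out; alternatively argue that $r < 1$ would make $\mu$ effectively finitely supported up to exponentially small corrections, which is incompatible with $m$ being stationary for a genuinely infinitely-supported walk. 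The reciprocal inequality $\limsup_n |\mu(\gamma_n)|^{1/n} \le 1$ is automatic since $|\mu(\gamma_n)| \le \|\mu\| < \infty$.

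For part~(3), I would interpret the identity $\sum_\gamma \mu(\gamma)/(\gamma.\infty)^{k+1} = 0$ (and its $\mu^{*n}$-analogues) via the substitution $\gamma.\infty = 1/\overline{\gamma.0}$ — indeed for $g \in PSU(1,1)$ one checks $g.\infty = 1/\overline{g.0}$ from the explicit form of the M\"{o}bius transformation — so the vanishing becomes $\sum_\gamma \mu(\gamma) \overline{(\gamma.0)}^{k+1} = 0$, i.e. the measure $\sum_\gamma \mu(\gamma)\,\delta_{\gamma.0}$ on $\mathbb{D}$ is "analytically balanced" (all its anti-holomorphic moments vanish). I would then invoke the theory relating such moment conditions to the boundary behaviour of the point masses $\gamma.0$: if $\{\gamma.0\}$ were \emph{not} non-tangentially dense on a set of positive Lebesgue measure, a Fatou-type or harmonic-measure argument would let one separate that portion of the mass, contradicting the balancing. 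The cited consequence $\sum_\gamma (1-|\gamma.0|) = \infty$ then follows directly from \cite[Remark 2]{brownsums}, since a Blaschke sequence cannot be non-tangentially dense. The main obstacle I anticipate is part~(2): extracting a sharp growth bound on the individual coefficients $\mu(\gamma_n)$ from the vanishing of a power series whose coefficients are \emph{sums} over $\gamma$ requires genuinely using the interaction between the convolution structure of $\{\mu^{*n}\}$ and the moment identities, rather than a one-line manipulation — one needs to rule out delicate cancellation phenomena, and making the normal-families argument rigorous (choosing the right normalization so the limit is nonzero) is where the real work lies.
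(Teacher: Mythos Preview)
Your argument for part~(1) is correct and is exactly how the paper does it: $f_m\equiv 0$ on $\mathbb{D}$, so \eqref{intro: functional equation} collapses to the stated vanishing of the Borel series.

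For parts~(2) and~(3), however, your proposal has genuine gaps, and the paper's argument proceeds quite differently. For part~(3), the paper does not try to argue directly from the moment identities $\sum_\gamma \mu(\gamma)\overline{(\gamma.0)}^{k+1}=0$ via harmonic measure or Fatou-type reasoning. Instead it performs the change of variables $z\mapsto 1/z$ on the Borel series $\sum_\gamma \mu(\gamma)/(z-\gamma.\infty)=0$, turning it (after subtracting a constant term that vanishes by evaluating at $z=0$) into a Borel series with poles at $(\gamma.\infty)^{-1}=\overline{\gamma.0}\in\mathbb{D}$ that vanishes on $\mathbb{D}_e$. Then the Brown--Shields--Zeller theorem \cite[Theorem~3]{brownsums} applies directly: such a nontrivial vanishing Borel series exists \emph{only if} the pole set is non-tangentially dense in $\mathbb{T}$. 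Your ``Fatou-type or harmonic-measure argument'' is not a proof; the existence of nontrivial $l^1$-coefficients making the Borel series vanish is a delicate phenomenon, and non-tangential density is precisely the content of the Brown--Shields--Zeller characterization, not something one can extract by separating mass.

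For part~(2), your normal-families/convolution-power sketch is not salvageable as stated. The paper invokes a classical theorem of Beurling: if $|z_n|\downarrow 1$ with $|z_n|>1$, and $\sum_n c_n/(z-z_n)=0$ for $|z|<1$ with $\limsup_n |c_n|^{1/n}<1$, then all $c_n=0$. Applying this with $z_n=\gamma_n.\infty$ and $c_n=\mu(\gamma_n)$ gives the contrapositive immediately: if $\mu$ is a (nonzero) Furstenberg measure, then $\limsup_n|\mu(\gamma_n)|^{1/n}\ge 1$, and the reverse inequality is trivial. Your proposed route---reducing to ``effectively finitely supported up to exponentially small corrections'' and then appealing to the finitely-supported case---does not work, because the exponentially small tail could in principle carry all the cancellation; ruling this out is exactly what Beurling's theorem accomplishes, and there is no shortcut through convolution powers.
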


\textbf{Remark.} One fascinating detail of this theorem lies in the fact that the Brown-Shields-Zeller theorem detects the divergence of the Poincar\'e series for first-kind groups at the critical exponent $\delta = 1$, as
\[
\sum_{\gamma} e^{- d_{\mathbb{H}^2}(0, \gamma.0)} = \sum_{\gamma} e^{-\ln\left( \frac{1 + |\gamma.0|}{1 - |\gamma.0|} \right)} \sim \sum_{\gamma} (1 - |\gamma.0|).
\]

Finally, as a corollary from Fatou's theorem, we get a functional-analytic necessary condition for existence of $\mu$-stationary measures with $L^p$-density for $1 < p < \infty$. Before stating the corollary, let us recall the \textbf{Blaschke condition} for a sequence $\{ z_n \} \subset \mathbb{D}$:

\begin{equation}
	\label{Blachke condition}
	\sum_{\gamma \in \text{supp} \, \mu} (1 - |\gamma.0|) < \infty.
\end{equation}

We will say that $\mu$ satisfies the Blaschke condition if and only if $\{ \gamma.0 \}_{\gamma \in \text{supp} \, \mu}$ satisfies \eqref{Blachke condition}.

\begin{corollary}
	\label{functional-analytic necessary condition}
	Let the support of $\mu$ satisfy the Blaschke condition. Then for any $\mu$-stationary measure $\mu$ with $L^p$-density for $1 < p < \infty$, we have
	\[
	f_\nu(z) \in (\overline{T_\mu^*(B_\mu H^q)})^{\perp} \subset H^p,
	\]
	where
	\begin{itemize}
		\item $T_\mu(f) := \sum_{\gamma} \mu(\gamma) (f \circ \gamma^{-1}) (\gamma^{-1})' - f$ is considered as a bounded linear operator \\ $T_\mu : H^p(\mathbb{D}) \rightarrow H^p(\mathbb{D})$, and $\frac{1}{p} + \frac{1}{q} = 1$,
		\item $B_\mu H^q$ denotes the subspace of functions in $H^q$ which vanish on the support of $\mu$. This is a non-trivial closed backward-shift invariant subspace due to the Blaschke condition.
	\end{itemize}
	In particular, if $T_\mu^*(B_\mu H^q)$ is dense in $H^q$, then there are no $\mu$-stationary measures with $L^p$-density.
\end{corollary}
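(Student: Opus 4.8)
The plan is to read the functional equation \eqref{intro: functional equation} of Theorem \ref{T:main result} as the single identity $T_\mu(f_\nu)=g_\mu$ in $H^p$, where $g_\mu(z):=\int_G \frac{d\mu(\gamma)}{z-\gamma.\infty}$, and then to extract the conclusion from two separate facts: that $f_\nu$ actually lands in $H^p$, and that the geometry of $PSU(1,1)$ forces $g_\mu\perp B_\mu H^q$. For the first, if $\nu$ has density $\varphi\in L^p(m)$ with $1<p<\infty$, then $z f_\nu(z)$ differs from the Riesz (analytic) projection of $\varphi$ by an additive constant, so $f_\nu\in H^p(\mathbb D)$ by the M.\ Riesz projection theorem --- this is the one point where $p>1$ is genuinely used. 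Granting the orthogonality $g_\mu\perp B_\mu H^q$, the rest is formal: for every $w\in H^q$,
\[
\langle f_\nu,\, T_\mu^*(B_\mu w)\rangle \;=\; \langle T_\mu f_\nu,\, B_\mu w\rangle \;=\; \langle g_\mu,\, B_\mu w\rangle \;=\; 0,
\]
and, since the pairing is continuous, $f_\nu$ then annihilates the closure as well, i.e.\ $f_\nu\in\bigl(\overline{T_\mu^*(B_\mu H^q)}\bigr)^{\perp}$.

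The heart of the matter is the orthogonality $g_\mu\perp B_\mu H^q$, and it reduces to the elementary reflection identity $\gamma.\infty=1/\overline{\gamma.0}$, valid for every $\gamma\in PSU(1,1)$ (writing $\gamma(z)=\frac{az+b}{\bar b z+\bar a}$ with $|a|^2-|b|^2=1$, one has $\gamma.0=b/\bar a$ and $\gamma.\infty=a/\bar b$). This yields
\[
\frac{1}{z-\gamma.\infty}\;=\;\frac{-\,\overline{\gamma.0}}{1-\overline{\gamma.0}\,z}\;=\;-\,\overline{\gamma.0}\;k_{\gamma.0}(z),
\]
where $k_a(z)=(1-\bar a z)^{-1}$ is the Szeg\H{o} kernel at $a\in\mathbb D$. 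Since $B_\mu(\gamma.0)=0$ for every $\gamma\in\text{supp}\,\mu$, the reproducing identity $\langle B_\mu w,\,k_{\gamma.0}\rangle=(B_\mu w)(\gamma.0)=0$ shows that \emph{each} term $\frac{1}{z-\gamma.\infty}$ of the integrand defining $g_\mu$ is already orthogonal to $B_\mu H^q$. What remains is to pull the integral $\int_G d\mu(\gamma)$ past the pairing, and here the first-moment hypothesis is precisely calibrated: one has $\bigl\|\frac{1}{z-\gamma.\infty}\bigr\|_{H^1}\lesssim \log\frac{1+|\gamma.0|}{1-|\gamma.0|}$, so $g_\mu=\sum_\gamma \mu(\gamma)\frac{1}{z-\gamma.\infty}$ converges absolutely in $H^1$; combining this with $g_\mu=T_\mu f_\nu\in H^p$ and the density of $H^\infty$ in $H^q$ gives $\langle g_\mu,B_\mu w\rangle=\sum_\gamma\mu(\gamma)\langle\frac{1}{z-\gamma.\infty},B_\mu w\rangle=0$ for all $w\in H^q$.

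I expect the real technical work to be this interchange --- that is, confirming $g_\mu$ is a bona fide element of $H^p$ and that the partial sums converge to it in a topology compatible with the $H^p$--$H^q$ pairing (the Blaschke condition, which makes $B_\mu$ a genuine $H^\infty$ function, together with the first-moment condition, are exactly what make this work) --- and, more pedantically, keeping the conventions consistent, so that $T_\mu^*$, the annihilator $(\cdot)^{\perp}$, and the reproducing property all refer to the same conjugate-linear pairing realizing $(H^p)^*\cong H^q$. The boundedness of $T_\mu$ on $H^p$ under the Blaschke condition I take as already established.

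Finally, for the last assertion, suppose $T_\mu^*(B_\mu H^q)$ is dense in $H^q$. Since $1<p,q<\infty$, the spaces $H^p,H^q$ are reflexive and the pairing is non-degenerate, so the pre-annihilator of a dense subspace of $H^q$ inside $H^p$ is $\{0\}$; hence any $\mu$-stationary $\nu$ with $L^p$ density satisfies $f_\nu\equiv 0$. But $f_\nu\equiv 0$ forces $\widehat\nu(n)=0$ for all $n\ge 1$, and, $\nu$ being a positive (in particular real) measure, $\widehat\nu(-n)=\overline{\widehat\nu(n)}$, so $\widehat\nu(n)=0$ for all $n\neq 0$, i.e.\ $\nu=m$. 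Under the standing Blaschke hypothesis this is impossible: by Corollary \ref{intro: Lebesgue is stationary}(3), stationarity of $m$ would force $\sum_{\gamma\in\text{supp}\,\mu}(1-|\gamma.0|)=\infty$, contradicting \eqref{Blachke condition}. Therefore no $\mu$-stationary measure with $L^p$ density exists.
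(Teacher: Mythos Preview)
Your argument is correct and follows the same route as the paper's own proof: identify the right-hand side $g_\mu$ as a (countable) linear combination of reproducing kernels $k_{\gamma.0}$ via $\gamma.\infty=(\overline{\gamma.0})^{-1}$, observe that $B_\mu$ vanishes at every $\gamma.0$, and then pass to the adjoint. Your write-up is actually more thorough than the paper's---you spell out why $f_\nu\in H^p$ via M.\ Riesz, you justify the interchange of sum and pairing using the first-moment bound on $\|k_{\gamma.0}\|_{H^1}$, and you supply the missing endgame for the ``in particular'' clause (reducing to $\nu=m$ and invoking Corollary~\ref{intro: Lebesgue is stationary}(3) to contradict the Blaschke hypothesis), which the paper leaves implicit.
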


Finally, we would like to formulate another criterion for the singularity of the harmonic measure for random walks on lattices in $PSU(1,1)$.
\begin{theorem}
	\label{criterion}
	Let $\mu$ be a probability measure on a discrete subgroup $\Gamma$ of $PSU(1,1)$, and assume that $(S^1, \mu)$ is the model for the Poisson boundary of $(\Gamma, \mu)$. Then the following statements are equivalent.
	\begin{enumerate}
		\item The harmonic measure $\nu$ is singular.
		\item For $Leb$-almost all $\xi \in \mathbb{T}$ we have
		\[
		\lim_{r \rightarrow 1^-} \lim_{n \rightarrow \infty} \sum_{\gamma} \mu^{* n}(\gamma) \frac{1 - r^2}{|\gamma(0) - r\xi|^2} = 0.
		\]
		\item There exist $|z| < 1$ and $|w| > 1$ such that for $Leb$-a.s. $\xi \in S^1$ the following non-tangential limits exist and are equal to each other:
		\[
		\angle \lim\limits_{\gamma.z \rightarrow \xi} \frac{\lambda_z(\gamma) - \frac{1}{z - \gamma.\infty}}{(\gamma^{-1})'(z)} = \angle \lim\limits_{\gamma.w \rightarrow \xi} \frac{\lambda_w(\gamma) - \frac{1}{w - \gamma.\infty}}{(\gamma^{-1})'(w)}.
		\]
		\item For every $|w| > 1$ and the following non-tangential limits exist and are equal to each other for $Leb$-a.s. $\xi \in S^1$:
		\[
		\angle \lim\limits_{\gamma.0 \rightarrow \xi} \frac{\lambda_0(\gamma) - \overline{\gamma.0}}{(\gamma^{-1})'(0)} = \angle \lim\limits_{\gamma.w \rightarrow \xi} \frac{\lambda_w(\gamma) - \frac{1}{w - \gamma.\infty}}{(\gamma^{-1})'(w)}.
		\]
	\end{enumerate}
\end{theorem}

Corollary \ref{C:main corollary}.1 strictly strengthens the very last remark in \cite{Bourgain2012}, where it was proven that the Lebesgue measure is never stationary with respect to finitely supported measures on $PSU(1,1)$. Corollary \ref{C:main corollary}.2, in theory, provides a purely computational heuristic to showing singularity of stationary measures, as for lattices in $PSU(1,1)$, one expects the poles to converge to $\mathbb{T}$, whereas for dense subgroups one would expect the poles to accumulate inside $\mathbb{D}$, thus forcing the $H^1$-norms to stay bounded. 

Corollary \ref{intro: Lebesgue is stationary} provides several new insights into Furstenberg measures on $PSU(1,1)$. In particular, as our approach deals with signed and complex measures, we are able to talk about complex Furstenberg measures, which is not possible with any geometric approaches. In particular, we obtain Borel sums with poles in the orbits of a non-cocompact lattice which vanishes in $\mathbb{D}$, despite the fact that such counterexamples should be impossible due to Guivarch'-le Jan (see \cite{guivarch1990}). The catch is that the Brown-Shields-Zeller theorem does not control the moments nor the positivity of the resulting coefficients. We also exhibit the first known result restricting the moment conditions of a Furstenberg measure, once again, improving on \cite{Bourgain2012}. The notion of a non-tangential limit seems to be key in this approach. Finally, we remark that studying positive Furstenberg measures should be possible using techniques in \cite{bonsall1989vanishing} and \cite{hayman1990bases}, as they deal with Borel-like series having strictly positive coefficients.

Corollary \ref{functional-analytic necessary condition} provides a pretty significant restriction on the stationary measures in the $L^p$-class for $1 < p < \infty$, and, in theory, the images with respect to the adjoint operator $T_\mu^*$ can be computed explicitly for any measure $\mu$ satisfying the Blaschke condition.

Finally, Theorem \ref{criterion} provides an intrinsic criterion for singularity of the harmonic measure. We would like to highlight that we don't require any special moment conditions for this criterion to work, so it is applicable in a much larger generality than the criterion in \cite{blachere2011harmonic} applied to our case. It is of independent interest to try and relate the above ratios $\frac{\lambda_0(\gamma) - \overline{\gamma.0}}{(\gamma^{-1})'(0)}$ with $\frac{h}{l}$ -- the Hausdorff dimension of the harmonic measure. Unfortunately, our methods do not immediately provide another approach to showing exact-dimensionality for random walks with moment conditions which are worse than superexponential with respect to the hyperbolic distance.

The structure of the paper is as follows. In Section \ref{Preliminaries} we recall all necessary facts about transformations $PSU(1,1)$ and provide a brief recap of complex-analytic tools we are going to use. In Section \ref{Holomorphic stationarity condition} we introduce an appropriate integral transform which fully respects the action of $PSU(1,1)$ to obtain a holomorphic necessary condition for $\mu$-stationarity, thus proving Theorem \ref{T:main result}. In Section \ref{Squeezing water from a stone} we extract the most we can from the resulting equation, using state-of-the-art techniques related to generalized analytic continuations. Finally, in Section \ref{Criterion for the singularity of the harmonic measure} we use the developed machinery to formulate a criterion for the singularity of the harmonic measure in terms of the non-tangential convergence of harmonic functions on lattices.

\subsection{Acknowledgements}
I would like to thank Ilyas Bayramov, Ivan Nikitin, Ilia Binder, Kunal Chawla, Mathav Murugan, Pablo Shmerkin, Lior Silberman and Omer Angel for useful discussions. Also I am tremendously grateful to Alexander Kalmynin, Giulio Tiozzo and Tianyi Zheng for reading the preliminary version of this preprint and for providing continued support and encouragement throughout the past year. Finally, I would like to thank Giulio Tiozzo for organizing a short visit to the Fields institute on Feb 19--22, 2024. 

The author's research is supported by the Natural Sciences and Engineering Research Council of Canada (NSERC).
\section{Preliminaries}
\label{Preliminaries}
\subsection{Everything you need to know about isometries of the disk model of $\mathbb{H}^2$}
In this subsection we will recall basic facts about $PSU(1,1)$ considered as a isometry group of the disk model $\mathbb{D} = \{ |z| < 1 \}$ of the hyperbolic plane. 
\begin{definition}
	\[
	PSU(1,1) = \left\lbrace z \mapsto \frac{az + b}{\overline{b} z + \overline{a}} : a, b \in \mathbb{C}, |a|^2 - |b|^2 = 1 \right\rbrace.
	\]
\end{definition}

From the definition it is evident that every transformation in $PSU(1,1)$ can be represented by a matrix $\begin{pmatrix}
	a & b \\ \overline{b} & \overline{a}
\end{pmatrix}$ (mod scalar matrices). In particular, if $\gamma(z) = \frac{az + b}{\overline{b} z + \overline{a}}$ then $\gamma^{-1}(z) = \frac{\overline{a} z - b}{ -\overline{b}z + a}$.

Also, it will turn out that sometimes working with $\infty$ as a basepoint is more convenient than choosing $0 \in \mathbb{H}^2$, we will use

\begin{equation}
	\label{from inside to outside}
	\overline{\gamma\left( \overline{z}^{-1} \right)} = \frac{1}{\gamma(z)}, \quad z \in \overline{\mathbb{C}},
\end{equation}

and, as a simple corollary,

\begin{equation}
	\label{zero-infty}
	\gamma.\infty = \frac{a}{\overline{b}} = \left( \frac{\overline{b}}{a}\right)^{-1} = (\overline{\gamma.0})^{-1}.
\end{equation}

\begin{lemma}
	Let $\gamma(z) = \frac{az + b}{\overline{b} z + \overline{a}}$. then
	\[
	\gamma'(z) = \frac{1}{(\overline{b} z + \overline{a})^2}.
	\]
\end{lemma}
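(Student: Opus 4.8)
The plan is to apply the quotient rule directly and then invoke the normalization $|a|^2 - |b|^2 = 1$ that defines elements of $PSU(1,1)$. Write $\gamma(z) = \frac{N(z)}{D(z)}$ with $N(z) = az+b$ and $D(z) = \overline{b}z + \overline{a}$, so that $N'(z) = a$ and $D'(z) = \overline{b}$. The quotient rule then gives
\[
\gamma'(z) = \frac{N'(z)D(z) - N(z)D'(z)}{D(z)^2} = \frac{a(\overline{b}z+\overline{a}) - \overline{b}(az+b)}{(\overline{b}z+\overline{a})^2}.
\]

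The remaining step is to simplify the numerator: expanding, the terms linear in $z$ cancel, $a\overline{b}z - \overline{b}az = 0$, leaving $a\overline{a} - \overline{b}b = |a|^2 - |b|^2$, which equals $1$ by the defining relation. Hence $\gamma'(z) = \frac{1}{(\overline{b} z + \overline{a})^2}$, as claimed. One can equivalently observe that this is just the specialization to $ad - bc = 1$ of the general identity $\left(\frac{az+b}{cz+d}\right)' = \frac{ad-bc}{(cz+d)^2}$, with $c = \overline{b}$ and $d = \overline{a}$.

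I do not expect any genuine obstacle here; the computation is routine. The only point deserving a word of care is that, as in \eqref{from inside to outside}, the formula is to be read on $\overline{\mathbb{C}}$ with the usual conventions at the pole $z = -\overline{a}/\overline{b}$ and at $z = \infty$, but since $\gamma$ is a biholomorphism of $\overline{\mathbb{C}}$ this is standard and causes no difficulty.
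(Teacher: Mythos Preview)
Your proof is correct and is essentially identical to the paper's own argument: both apply the quotient rule and simplify the numerator using $|a|^2 - |b|^2 = 1$. The additional remark about the general formula $\left(\frac{az+b}{cz+d}\right)' = \frac{ad-bc}{(cz+d)^2}$ is a nice touch but not needed.
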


\begin{proof}
	\[
	\gamma'(z) = \frac{a ( \overline{b} z + \overline{a} ) - (az + b)\overline{b}}{(\overline{b} z + \overline{a})^2} = \frac{|a|^2 - |b|^2}{(\overline{b} z + \overline{a})^2} = \frac{1}{(\overline{b} z + \overline{a})^2}.
	\]
\end{proof}

Recall that for any $\gamma(z) = \frac{a z + b}{\overline{b} z + \overline{a}}$ with $|a|^2 - |b|^2 = 1$ we have
\begin{equation}
	\label{poles of the log derivative}
		\frac{1}{2} \frac{\gamma''(z)}{\gamma'(z)} = \frac{1}{2} \frac{-2\overline{b}}{(\overline{b}z + \overline{a})^3} \left( \frac{1}{(\overline{b}z + \overline{a})^2}\right)^{-1} = \frac{- \overline{b}}{\overline{b}z + \overline{a}} = - \frac{1}{z + \frac{\overline{a}}{\overline{b}}} = - \frac{1}{z - \gamma^{-1}.\infty}.
\end{equation}

Finally, we also will require the following proposition.

\begin{proposition}
	\label{weighted composition ops}
	Let $\gamma \in PSU(1,1)$ and consider the linear operator
	\[
	V_\gamma(f)(z) := f(\gamma^{-1}(z)) (\gamma^{-1})'(z).
	\] 
	Then for every $w \in \mathbb{C}$ we have
	\[
	V_\gamma \left( \frac{1}{w - z} \right) = \frac{1}{\gamma.w - z} - \frac{1}{\gamma.\infty - z}.
	\]
\end{proposition}
\begin{proof}
	We can prove it via a direct computation.
	\[
	V_\gamma \left( \frac{1}{w - z} \right) = \frac{(\gamma^{-1})'(z)}{w - \gamma^{-1}.z}.
	\]
	If we denote $\gamma^{-1}.z = \frac{\overline{a} z - b}{-\overline{b}z + a}$, then we get
	\[
	\begin{aligned}
		\frac{(\gamma^{-1})'(z)}{w - \gamma^{-1}.z} 
		&= \frac{1}{(- \overline{b}z + a)^2}\left( \frac{1}{w - \frac{\overline{a} z - b}{-\overline{b}z + a}}\right) = \frac{1}{(-\overline{b} z + a)(-\overline{b} z w + a w - \overline{a} z + b)} = \\
		&= \frac{1}{(-\overline{b} z + a)(-z (\overline{b} w + \overline{a}) + a w + b)} =  \frac{1}{(-\overline{b} z + a)(\overline{b} w + \overline{a})(-z  + \gamma.w)} = \\ 
		&=\frac{\frac{1}{\overline{b}^2 w + \overline{a}\overline{b}}}{(\frac{a}{\overline{b}} - z)(-z  + \gamma.w)} = \frac{\frac{a}{\overline{b}} - \frac{aw + b}{\overline{b}w + \overline{a}}}{(\gamma.\infty - z)(\gamma.w - z)} = \frac{1}{\gamma.w - z} - \frac{1}{\gamma.\infty - z}.
	\end{aligned}
	\]
\end{proof}

\subsection{Random walks on groups}
Here we present standard background on random walks, we refer to \cite{furstenberg1963noncommuting}, \cite{KV83}, \cite{kaimanovich2000poisson} for more details.

\begin{definition}
	Let $\mu$ be a probability measure on a group $G$. A random walk on $G$ induced by $\mu$ is a collection of $G$-valued random variables $(X_n)_{n \ge 1}$,
	\[
	X_n = g_1 \dots g_n, 
	\]
	where $g_i$ are i.i.d $\mu$-distributed $G$-valued random variables.
\end{definition}
\textbf{Remark.} We assume that our random walks start from identity unless explicitly mentioned otherwise.

\begin{definition}
	Given a probability measure $\mu$ on $G$, we say that a function $\varphi : G \rightarrow \mathbb{C}$ is $\mu$-harmonic if
	\[
	\int_G \varphi(g \gamma) d\mu(\gamma) = \varphi(g), \quad g \in G.
	\]
\end{definition}

We will say that a random walk is non-degenerate if the support of $\mu$ generates $G$ as a semigroup.

One way to understand bounded $\mu$-harmonic function is by considering the \textbf{Poisson boundary} $(\partial_{Pois} G, \nu)$ of the random walk $(X_n)$. We are not going to present all possible ways to construct the Poisson boundary, so we will use the following idea. Suppose that $(G, \mu)$ acts on a Borel space $(X, \nu)$ in such a way that $\nu$ is a $\mu$-stationary measure:
\[
\nu = \int_{G} \gamma_* \nu d\mu(\gamma).
\]
Then it makes sense to consider the following correspondence:
\begin{equation}
	\label{Poisson harmonic correspondence}
	L^\infty(X, \nu) \ni f \mapsto \left( g \mapsto \int_G f(x) dg_* \nu(x) \right) \in Har^\infty(G, \mu).
\end{equation}
It is not difficult to see that the resulting function on $G$ is always a bounded $\mu$-harmonic function. This motivates a definition for the ``harmonic'' model of the Poisson boundary of $(G, \mu)$.
\begin{definition}
	Let $(X, \nu)$ be a Borel space equipped with a $\mu$-stationary probability measure $\nu$ with respect to an action of $(G, \mu)$. Then we way that $(X, \nu)$ is a (harmonic) model for the Poisson boundary of $(G, \mu)$ if \eqref{Poisson harmonic correspondence} is an isometric isomorphism.
\end{definition}

In our paper we will focus on measures $\mu$ on $PSU(1,1)$ with respect to which the unit circle $S^1 \simeq \mathbb{T} = \{ |z| = 1 \}$ will be a model for the Poisson boundary, equipped with a suitable $\mu$-stationary probability measure $\nu$, which we will refer to as the (unique) \textbf{harmonic measure}. We won't focus too much on the conditions on $\mu$ which make $(S^1, \nu)$ the (unique) Poisson boundary, as this is an area of research with a very long history. However, keeping in mind that the lattices of $PSU(1,1)$ are hyperbolic groups, we would like to mention the latest results in \cite{chawla2022poissonboundaryhyperbolicgroups}, which ensures that the Poisson boundary coincides with the Gromov boundary with very minimal restrictions on $\mu$.

\textbf{Remark.} Indeed, the condition of $\mu$-stationarity can be interpreted as self-similarity with respect to the action of $PSU(1,1)$ on the unit circle. We have to be a bit careful because the M\"{o}bius maps in the corresponding IFS are not contractions, though. 

Finally, let us restate the Kaimanovich-le Prince's singularity conjecture:
\begin{conjecture}[\cite{kaimanovich2011matrix}]
	For every finitely supported non-degenerate random walk $(X_n)$ generated by a probability measure $\mu$ on a cocompact Fuchsian group $\Gamma$, the harmonic measure $\nu$ is singular with respect to the Lebesgue measure on $S^1 \simeq \partial \Gamma$.
\end{conjecture}

Our goal will be to develop approaches to the above conjecture via complex-analytic techniques, mainly relying on the properties of composition operators on Hardy spaces and the structure of special closed subspaces of the Hardy spaces $H^p(\mathbb{D})$.

\textbf{Remark.} This conjecture can be viewed as an opposite ``twin'' of the (still open) Bernoulli convolution problem, which is concerned with showing the absolute continuity of a certain family of measures on $\mathbb{R}$ which are stationary with respect to the semigroup generated by maps $x \mapsto \lambda x \pm 1$, $\lambda \in (1/2, 1)$. 

\subsection{Complex-analytic prerequisites: Hardy spaces}
We will heavily rely on standard complex-analytic techniques related to Cauchy transforms and generalized analytic continuation, we refer to standard textbooks on these topics: \cite{Shapiro1968}, \cite{cimahardy}, \cite{book:738388}.

Let us denote $\mathbb{D} = \{ |z| < 1 \}$ and $\mathbb{D}_e := \overline{\mathbb{C}} \setminus \overline{\mathbb{D}}$. Given a domain $U \subset \overline{\mathbb{C}}$, we will denote the space of holomorphic functions on $U$ by $\mathfrak{H}(U)$ and the space of meromorphic functions on $U$ by $\mathfrak{M}(U)$.

\begin{definition}
	Let $0 < p < \infty$. The \textbf{Hardy space} $(H^p(\mathbb{D}), || \cdot ||_p)$ is a space of holomorphic functions on $\mathbb{D}$ defined as follows.
	\[
	H^p(\mathbb{D}) = \left\lbrace  f \in \mathfrak{H}(\mathbb{D}) \ | \ ||f||_p := \sup_{0 < r < 1} \left( \frac{1}{2 \pi} \int_{0}^{2 \pi} |f(r e^{it})|^p dt \right)^{1/p} < \infty \right\rbrace.
	\] 
	If $p = \infty$, then we define $(H^\infty(\mathbb{D}), ||f||_\infty)$ as the space of bounded holomorphic functions on $\mathbb{D}$ equipped with the sup-norm.
	
	Finally, we define $H^p(\mathbb{D}_e) := \{ z \mapsto f(1/z) : f \in H^p(\mathbb{D})\}$, with $H^p_0(\mathbb{D}_e) \subset H^p(\mathbb{D}_e)$ denoting functions vanishing at infinity.
\end{definition} 

It is well-known that for $1 \le p \le \infty$ the function $||\cdot||_p : H^p(\mathbb{D}) \rightarrow \mathbb{R}_{\ge 0}$ defines a norm, so the respective Hardy spaces $H^p(\mathbb{D})$ are Banach spaces for $1 \le p \le \infty$. For $0 < p < 1$ the Hardy spaces $H^p(\mathbb{D})$ admit a complete translation-invariant metric defined by $d(f, g) := ||f - g||^p_p$, but the topology it defines is not non-locally convex.

\begin{definition}
	\label{D:non-tangential convergence}
	A sequence of points $\{z_n\} \subset \mathbb{D}$ is said to \textbf{non-tangentially} converge to $\xi \in \partial \mathbb{D}$ if there exists a \textbf{Stolz angle} $A = \{ \frac{|\xi - z|}{1 - |z|} \le M \}$ and $N > 0$ such that $z_n \rightarrow \xi$ and $z_n \subset A$ for $n > N$.
\end{definition}

We will frequently use the following classical theorems.

\begin{theorem}[Fatou's theorem]
	\label{T: Fatou theorem}
	Every holomorphic function $f \in H^p(\mathbb{D})$ for $0 < p \le \infty$ admits a non-tangential limit $f(\zeta)$ for $Leb$-almost every $\zeta \in S^1$ which belongs to $L^p(S^1, Leb)$. Moreover, for $0 < p < \infty$ we have
	\[
	||f||_p = \left( \frac{1}{2 \pi} \int_{0}^{2 \pi} |f(e^{i t})|^p dt\right)^{1/p} .
	\]
\end{theorem}

\begin{theorem}[F. Riesz, M. Riesz]
	\label{T: Riesz-Riesz theorem}
		For $p \ge 1$ we have a complete realization of the Hardy spaces $H^p(\mathbb{D})$ as subspaces $L^p(S^1)$: these are exactly the functions with vanishing negative Fourier coefficients.
\end{theorem}

Let us briefly list some examples of holomorphic functions in Hardy spaces.

\begin{example}
	\indent
	\begin{itemize}
		\item Analytic polynomials $p(z) = a_0 + \dots + a_n z^n$ are dense in $H^p(\mathbb{D})$ for all $0 < p < \infty$, and are $wk^*$-dense in $H^\infty(\mathbb{D})$. (\cite[Theorem 1.9.4]{book:738388})
		\item If $0 < p < q \le \infty$, then $H^q(\mathbb{D}) \subsetneq H^p(\mathbb{D})$.
		\item For any $z_0 \in \mathbb{D}_e$ and $k > 0$, we have $\frac{1}{(z - z_0)^k} \in H^\infty(\mathbb{D})$, hence $\frac{1}{(z - z_0)^k} \in H^p(\mathbb{D})$ for any $0 < p \le \infty$. Keep in mind that this will not hold for any $|z_0| = 1$, see later examples.
	\end{itemize}
\end{example}

\subsection{Complex-analytic prerequisites: pseudocontinuations}

\begin{definition}
	Let $f$ be a meromorphic function on $\mathbb{D}$. If there exists a function $T_f$ which is meromorphic on $\mathbb{D}_e$ such that the non-tangential limits of $f$ and $\tilde{f}$ exist on $\mathbb{T}$ and coincide Leb-almost everywhere, then we say that $f$ is \textbf{pseudocontinuable}, and $\tilde{f}$ is a \textbf{pseudocontinuation} of $f$, and vice versa.
\end{definition}

In our paper we will use several important results about non-tangential limits and pseudocontinuations.

\begin{theorem}[Lusin-Privalov, \cite{privalov1956randeigenschaften}]
	If $f$ is pseudocontinuable, then its pseudocontinuation is unique.
\end{theorem}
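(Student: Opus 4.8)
The plan is to reduce the uniqueness assertion to a boundary uniqueness theorem for a single meromorphic function, and then to prove the latter by the classical sawtooth‑domain argument.

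\textbf{Reduction.} Suppose $T_f^{(1)}$ and $T_f^{(2)}$ are both pseudocontinuations of $f$. Then $g := T_f^{(1)} - T_f^{(2)} \in \mathfrak{M}(\mathbb{D}_e)$, and since the non-tangential limits of both $T_f^{(i)}$ agree Leb-a.e.\ on $\mathbb{T}$ with those of $f$, the non-tangential limits of $g$ vanish Leb-a.e. The inversion $z \mapsto 1/z$ is a conformal involution of $\overline{\mathbb{C}}$ carrying $\mathbb{D}_e$ onto $\mathbb{D}$, fixing $\mathbb{T}$ setwise, preserving arclength-null sets, and mapping Stolz angles to Stolz angles; so it suffices to prove: if $g \in \mathfrak{M}(\mathbb{D})$ has non-tangential limit $0$ at every point of a set $E \subseteq \mathbb{T}$ with $|E| > 0$, then $g \equiv 0$.

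\textbf{Sawtooth argument.} Apply Egorov's theorem to the measurable angular-limit function to obtain a compact $E' \subseteq E$ with $|E'| > 0$ on which $g \to 0$ uniformly along Stolz angles. Since $g$ has only finitely many poles in each compact subset of $\mathbb{D}$, by shrinking the apertures and heights of the cones we may assume $g$ is holomorphic and bounded, $|g| \le M$, on the sawtooth region $\Omega := \bigcup_{\xi \in E'} \Gamma_\xi \subseteq \mathbb{D}$, where each $\Gamma_\xi$ is a truncated cone with vertex $\xi$; the cones may be arranged so that $\Omega$ is a Jordan domain with rectifiable boundary and $\overline{\Omega} \cap \mathbb{T} \supseteq E'$. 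Let $\varphi : \mathbb{D} \to \Omega$ be a conformal map. Because $\partial \Omega$ is rectifiable, the F.\ and M.\ Riesz theorem guarantees that $\varphi$ and $\varphi^{-1}$ carry arclength-null boundary sets to arclength-null sets, so $\varphi^{-1}(E')$ has positive measure in $\mathbb{T}$. Then $h := g \circ \varphi \in H^\infty(\mathbb{D})$ has non-tangential limit $0$ on $\varphi^{-1}(E')$. But a function $h \in H^\infty$ with $h \not\equiv 0$ satisfies $\int_{\mathbb{T}} \log|h^*(e^{it})|\, dt > -\infty$, which is impossible when $h^* = 0$ on a set of positive measure; hence $h \equiv 0$, so $g \equiv 0$ on $\Omega$. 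As $\Omega$ is a nonempty open subset of $\mathbb{D}$, the identity theorem forces $g \equiv 0$ on $\mathbb{D}$, and undoing the inversion gives $T_f^{(1)} = T_f^{(2)}$.

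\textbf{Main obstacle.} The crux is the construction of $\Omega$ meeting two simultaneous demands — a rectifiable Jordan boundary on the one hand, and $\overline{\Omega} \cap \mathbb{T}$ of positive length containing $E'$ on the other — together with the verification, through the F.\ and M.\ Riesz theorem, that the boundary correspondence under $\varphi$ does not collapse $E'$ to a null set (equivalently, that the harmonic measure $\omega(\cdot, E', \Omega)$ is strictly positive in $\Omega$). This potential-theoretic step is the technical heart of Lusin--Privalov; the presence of poles of $g$ is a side issue, dispatched by local finiteness of the pole set and truncation of the cones.
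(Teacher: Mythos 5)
The paper does not prove this statement at all: it is quoted as a classical result with a citation to Privalov's book, so there is no internal proof to compare against. Your argument is exactly the standard Privalov proof of the Lusin--Privalov boundary uniqueness theorem (reduce to a single meromorphic function with vanishing non-tangential limits on a set of positive measure, pass to a sawtooth domain, map conformally, and invoke $\log|h^*|\in L^1$ for nonzero $H^\infty$ functions), and it is correct in outline. Two places where you wave your hands deserve a word in a full write-up: (i) the set $\Omega=\bigcup_{\xi\in E'}\Gamma_\xi$ need not be connected, so one must pass to a single component whose closure meets $E'$ in positive length (a countable-union argument does this) before speaking of a Jordan domain and a Riemann map; and (ii) the Egorov step requires checking measurability of the relevant truncated-cone suprema, which is the usual way one legitimizes the uniform smallness of $|g|$ near $\mathbb{T}$ on the cones and simultaneously rules out poles in $\Omega$. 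Both are standard and do not affect the soundness of the approach; the boundary-correspondence step via $\varphi'\in H^1$ and F.\ and M.\ Riesz is stated correctly.
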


As a corollary, we get that pseudocontinuations are compatible with analytic continuations.

\begin{definition}
	Let $\nu$ be a complex finite Borel measure on $S^1$. Then its \textbf{Cauchy-Szeg\H{o} transform} is the integral
	\begin{equation}
		C_\nu(z) := \frac{1}{2\pi} \int_0^{2 \pi} \dfrac{d \nu(t)}{1 - e^{-it} z}.
	\end{equation}
	Its \textbf{Cauchy transform} is the integral
	\begin{equation}
		f_\nu(z) := \frac{1}{2\pi} \int_0^{2 \pi} \dfrac{d \nu(t)}{e^{it} - z}.
	\end{equation}
\end{definition}

It is easy to see that $C_\nu(z) = f_{\nu'}(z)$ for $d \nu'(t) = e^{it} d \nu(t)$, but we will still use both transforms when convenient.

The properties of $C_\nu(z)$ as a holomorphic function on $\mathbb{D}$ strongly depend on $\nu$ itself, but the following theorem of Smirnov ensures that we at least end up in $H^p(\overline{\mathbb{C}}  \setminus \mathbb{T})$ for $p < 1$.

\begin{theorem}[Smirnov]
	\label{T:Smirnov}
	Let $f(z) = C_\nu(z)$ for some complex finite Borel measure $\nu$. Then $f\vert_{\mathbb{D}} \in H^p(\mathbb{D})$ and $f\vert_{\mathbb{D}_e} \in H^p(\mathbb{D}_e)$ for all $0 < p < 1$.
\end{theorem}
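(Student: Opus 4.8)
The plan is to reduce the general complex finite Borel measure $\nu$ to the positive case by the Jordan/Hahn decomposition, so it suffices to prove $C_\nu \in H^p(\overline{\mathbb{C}} \setminus \mathbb{T})$ for $0 < p < 1$ when $\nu \geq 0$. After normalizing, I may even assume $\nu$ is a probability measure. The key object is the Poisson-type estimate for the kernel: writing $z = re^{i\theta}$ with $r < 1$, one has $|1 - e^{-it}z| \geq 1 - r$, but this crude bound only gives boundedness away from $\mathbb{T}$, not an $H^p$ estimate, so I need a sharper approach.

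The standard route I would take is via the subharmonicity of $|C_\nu|^p$ together with an explicit comparison to the Poisson integral of $\nu$. First I would observe that for a positive measure $\nu$,
\[
\operatorname{Re}\frac{1}{1 - e^{-it}z} = \frac{1}{2}\left( \frac{1 - |z|^2}{|1 - e^{-it}z|^2} + 1 \right) \geq \frac{1}{2} P_z(t),
\]
where $P_z(t)$ is the Poisson kernel, so that $\operatorname{Re} C_\nu(z) \geq \tfrac12$ (since $P_z$ integrates to $1$ against a probability measure), and more importantly $|C_\nu(z)| \geq \operatorname{Re} C_\nu(z) > 0$ everywhere on $\mathbb{D}$. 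Thus $g := 1/C_\nu$ is a bounded holomorphic function on $\mathbb{D}$ with $|g| \leq 2$, hence $g \in H^\infty \subset H^q$ for all $q$. Now the real difficulty: boundedness of $1/C_\nu$ does not immediately give $C_\nu \in H^p$. The correct tool here is that $C_\nu$ is an \emph{outer-dominated} function, or more directly, one uses the fact that $\operatorname{Re} C_\nu > 0$ means $C_\nu$ maps $\mathbb{D}$ into the right half-plane, and functions with positive real part lie in $H^p$ for every $p < 1$ — this is precisely the classical Smirnov / Riesz lemma on functions of bounded characteristic with positive real part. So the main step is: \textbf{a holomorphic function $F$ on $\mathbb{D}$ with $\operatorname{Re} F > 0$ satisfies $F \in H^p(\mathbb{D})$ for all $0 < p < 1$}, which follows from the Herglotz representation $F(z) = i\operatorname{Im}F(0) + \int P_z(t)\,d\sigma(t)$ for a positive measure $\sigma$, combined with the weak-type $(1,1)$ bound for the Poisson integral (or Kolmogorov's theorem on conjugate functions).

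Having established this for $\mathbb{D}$, I would handle the exterior disk $\mathbb{D}_e$ by the same argument applied to $z \mapsto C_\nu(1/z)$, noting from the definition of $C_\nu$ that this is again (up to an additive constant and a reflection of $\nu$) a Cauchy transform of a finite positive measure, so the same positive-real-part reasoning applies; here one checks the behaviour at $\infty$ separately, where $C_\nu$ is in fact analytic and vanishes, causing no trouble. Finally, to pass from positive $\nu$ to complex $\nu$, I use that $H^p$ for $p < 1$ is a vector space (a quasi-Banach space), so a finite linear combination of four functions in $H^p$ stays in $H^p$. The main obstacle is genuinely the half-plane lemma: one must invoke that Poisson integrals of finite measures are weak-$(1,1)$ bounded and hence lie in $\bigcap_{p<1} L^p(\mathbb{T})$, which is where the restriction $p < 1$ (rather than $p \leq 1$) becomes essential — a point I would state carefully, since the atomic part of $\nu$ is exactly what can push $C_\nu$ out of $H^1$.
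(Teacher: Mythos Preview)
The paper does not prove this theorem; it is listed in the preliminaries (Section~\ref{Preliminaries}) as a classical result of Smirnov, with the reader referred to standard references on Cauchy transforms such as \cite{Shapiro1968}. There is therefore nothing in the paper to compare your argument against.

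That said, your sketch is essentially the standard textbook proof and is correct: reduce to positive $\nu$ via the Jordan decomposition (using that $H^p$ for $p<1$ is a vector space), verify the identity
\[
\operatorname{Re}\,\frac{1}{1-e^{-it}z}=\frac12\bigl(P_z(t)+1\bigr),
\]
so that $C_\nu$ has positive real part on $\mathbb{D}$, and then invoke the classical fact that holomorphic maps of $\mathbb{D}$ into a half-plane lie in $\bigcap_{p<1}H^p$ (via Herglotz and Kolmogorov, as you indicate). Your treatment of $\mathbb{D}_e$ is also correct: the substitution $z\mapsto 1/w$ gives $C_\nu(1/w)=c-C_{\tilde\nu}(w)$ for a constant $c$ and the reflected measure $\tilde\nu$, so the same half-plane argument applies. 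The only cosmetic point is that your claim ``$\operatorname{Re}\,C_\nu(z)\ge\tfrac12$'' depends on the normalization of $\nu$ relative to the factor $\tfrac{1}{2\pi}$ in the paper's definition of $C_\nu$; what matters, and what you actually use, is just $\operatorname{Re}\,C_\nu>0$.
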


We can do better if we know that $\nu \ll Leb$ due to a theorem of M. Riesz.

\begin{theorem}[Riesz]
	\label{Cauchy of Lp}
	Let $\nu$ be an absolutely continuous measure on $S^1$ with the $L^p$-density for $1 < p < \infty$. Then $C_\nu(z) \in H^p(\mathbb{D})$.
\end{theorem}

\textbf{Remark.} This theorem cannot not hold for $p = 1$ or $p = \infty$, as it is well-known that there are no continuous projections $L^1 \rightarrow H^1$ and $L^\infty \rightarrow H^{\infty}.$

Finally, the Cauchy transform of a positive Borel measure on $S^1$ is unique in the following sense:

\begin{theorem}[\cite{book:738388}, Corollary 4.1.3, Proposition 4.1.4]
	Let $\nu_1, \nu_2$ be two probability measures on $S^1$. \\ Then $C_{\nu_1} = C_{\nu_2}$ if and only if $f_{\nu_1} = f_{\nu_2}$ if and only if $\nu_1 = \nu_2$.
\end{theorem}

\begin{example}
	\indent
	\begin{itemize}
		\item Consider $f(z) = (1 - z)^{-1}$. This is the Cauchy transform of the Dirac delta measure $\delta_1$, so $f(z) \in H^p(\mathbb{D})$ for all $0 < p < 1$, but $f(z) \notin H^{1}(\mathbb{D})$. The idea is that the integral
		\[
		\int_{0}^1 \frac{dx}{x^p}
		\]
		converges for $p < 1$ and diverges for $p = 1$.
		\item (\cite{Ale79}) This suggests that for $0 < p < 1$ it makes sense to talk about the closure of all simple poles $z \rightarrow \frac{1}{1 - e^{it}z}$ on $\mathbb{T}$, and it turns out that this closure admits a very nice description:
		\begin{equation}
			\overline{\text{span}}^{H^p} \left\lbrace \frac{1}{1 - \xi z} : |\xi| = 1 \right\rbrace := H^p \cap \overline{H^p_0},
		\end{equation}
		where by $H^p \cap \overline{H^p_0}$ we denote the subspace of all functions $f(z) \in \mathcal{H}(\overline{\mathbb{C}} \setminus \mathbb{T})$, such that both inner and outer components lie in $H^p$, $f(\infty) = 0$ and inner + outer non-tangential limits a.e. exist and a.e. coincide.
	\end{itemize}
\end{example}

The above space $H^p \cap \overline{H^p_0}$ is very important because of the following theorem.

\begin{theorem}[Fatou]
	\label{Fatou theorem}
	Let $\nu$ be a finite complex Borel measure on $\mathbb{T}$. Denote the absolutely continuous part of $\nu$ by $F(\xi)$. Then
	\[
	\lim_{r \rightarrow 1^-} \frac{1}{2\pi} \int_{0}^{2 \pi} \frac{1 - r^2}{|e^{it} - r e^{i\theta}|^2} d\nu(t) = F(e^{i \theta})
	\]
	for $Leb$-almost every $e^{i \theta} \in \mathbb{T}$. 
\end{theorem}

As a simple corollary, we obtain that $\nu$ is a singular measure if and only if the inner and outer components of its Cauchy transform are pseudocontinuations of each other.
\section{Holomorphic stationarity condition}
\label{Holomorphic stationarity condition}
In this section we will provide proofs of the main results.
\begin{theorem}
	Let $\mu$ be a probability measure on $G=PSU(1,1)$. Consider $z \in \overline{\mathbb{C}} \setminus \mathbb{T}$ for which the integral $\int_G \frac{d \mu(\gamma)}{z - \gamma.\infty}$ converges absolutely. Then
	\begin{equation}
		\label{main equation, simp}
		\int_G f_\nu(\gamma^{-1}.z)(\gamma^{-1})'(z)  d\mu(\gamma) - f_\nu(z) = \int_G \frac{d \mu(\gamma)}{z - \gamma.\infty}.
	\end{equation}
\end{theorem}

\begin{proof}
	Due to Proposition \ref{weighted composition ops} we have
	\[
	\begin{aligned}
		T_\gamma(f_{\nu})(z) = T_\gamma \left( \frac{1}{2 \pi} \int_{0}^{2 \pi} \frac{d\nu(t)}{e^{it} -  z} \right) &= \frac{1}{2 \pi}  \int_{0}^{2\pi} \left(  \frac{1}{\gamma.e^{it} -  z} - \frac{1}{\gamma.\infty - z} - \frac{1}{e^{it} -  z} \right)  d \nu(t) = \\ 
		&= \frac{1}{2 \pi}  \int_{0}^{2\pi} \frac{d \gamma_* \nu(t)}{e^{it} -  z}  -  \frac{1}{2 \pi}  \int_{0}^{2\pi} \frac{d \nu(t)}{e^{it} -  z}  - \frac{1}{\gamma.\infty - z} .
	\end{aligned}
	\]
	Fubini's theorem applies since
	\[
	\int_{G \times [0, 2\pi]} \left| \frac{1}{\gamma.e^{it} - z} \right| d\mu(\gamma) d\nu(t) \le \int_{G \times [0, 2\pi]} \frac{1}{|1 - |z||} d\mu(\gamma) d\nu(t) < \infty.
	\]
	Therefore,
	\[
	\begin{aligned}
		& \int_G f_\nu(\gamma^{-1}.z)(\gamma^{-1})'(z)  d\mu(\gamma) - f_\nu(z) = \int_G T_\mu(f_\nu)(z)  d\mu(\gamma) = \int_G \left( \frac{1}{2 \pi} \int_{0}^{2 \pi} \frac{d(\gamma_* \nu - \nu)(t)}{e^{it}-z} - \frac{1}{\gamma.\infty - z}\right) d\mu(\gamma) = \\ &= \frac{1}{2 \pi} \int_{G} \int_{0}^{2\pi} \frac{d(\gamma_* \nu - \nu)(t)}{e^{it}-z} d\mu(\gamma) - \int_{G}\frac{1}{\gamma.\infty - z} d\mu(\gamma) = \frac{1}{2 \pi} \int_{0}^{2\pi} \int_{G}  \frac{d(\gamma_* \nu - \nu)(t)}{e^{it}-z} d\mu(\gamma) - \int_{G}\frac{1}{\gamma.\infty - z} d\mu(\gamma) = \\ 
		&= \int_{G}\frac{1}{z - \gamma.\infty} d\mu(\gamma).
	\end{aligned}
	\]
	which is precisely \eqref{main equation, simp}.
\end{proof}

Before stating the converse result, let us demonstrate a simple example.

\noindent \textbf{Example.} Let $\gamma \in PSU(1,1)$ be a non-elliptic element, and consider $\mu = \delta_{\gamma}$. Let us try to find all $\mu$-stationary measures $\nu$ using our functional equation. From \eqref{main equation, simp} we have
	\[
	f_\nu(\gamma^{-1}.z)(\gamma^{-1})'(z) - f_\nu(z) = \frac{1}{z - \gamma.\infty}.
	\]
	Observe that for any given $z_0 \in \mathbb{D}$ the function $n \mapsto f_\nu(\gamma^{-n}.z_0)(\gamma^{-n})'(z_0) - \frac{1}{z_0 - \gamma^n.\infty}$ is a bounded harmonic function on $\mathbb{Z}$.
	It is a harmonic function because
	\[
	\begin{aligned}
		& f_\nu(\gamma^{-n-1}.z_0)(\gamma^{-n-1})'(z_0) - \frac{1}{z_0 - \gamma^{n+1}.\infty} = f_\nu(\gamma^{-1}(\gamma^{-n}.z_0))(\gamma^{-1})'(\gamma^{-n}(z_0)) (\gamma^{-n})'(z_0) - \frac{1}{z_0 - \gamma^{n+1}.\infty} = \\ 
		&= \frac{(\gamma^{-n})'(z_0)}{\gamma^{-n}.z_0 - \gamma.\infty} + f_\nu(\gamma^{-n}.z_0) (\gamma^{-n})'(z_0) - \frac{1}{z_0 - \gamma^{n+1}.\infty} = f_\nu(\gamma^{-n}.z_0)(\gamma^{-n})'(z_0) - \frac{1}{z_0 - \gamma^n.\infty}.
	\end{aligned}
	\]
	Moreover, it is bounded, because
	\begin{equation}
		\label{bounded1}
		f_\nu(\gamma^{-n}.z_0)(\gamma^{-n})'(z_0) = \frac{1}{2 \pi} \int_0^{2 \pi} \frac{(\gamma^{-n})'(z_0) d\nu(t)}{e^{i t} - \gamma^{-n}.z_0} = \left( \frac{1}{2 \pi} \int_0^{2 \pi} \frac{d\nu(t)}{\gamma^n.e^{i t} - z_0}\right)  - \frac{1}{\gamma^n.\infty - z_0},
	\end{equation}
	so
	\[
	|f_\nu(\gamma^{-n}.z_0)(\gamma^{-n})'(z_0)| = \left| \left( \frac{1}{2 \pi} \int_0^{2 \pi} \frac{d\nu(t)}{\gamma^n.e^{i t} - z_0}\right)  - \frac{1}{\gamma^n.\infty - z_0} \right| \le \frac{2}{1 - |z_0|}.
	\]
 	Therefore, for all $n \in \mathbb{Z}$ there exists a constant $C(z_0)$ such that
	\[
	f_\nu(\gamma^{-n}.z_0)(\gamma^{-n})'(z_0) = \frac{1}{z_0 - \gamma^n.\infty} + C(z_0).
	\]
	In particular, $f_\nu(z_0) = C(z_0)$. We want to study both terms as $n \rightarrow \infty$. Due to the dominated convergence theorem we have
	\[
	\lim\limits_{n \rightarrow \infty} f_\nu(\gamma^{-n}.z_0)(\gamma^{-n})'(z_0) = \frac{\nu(\{\gamma_\infty\}) - 1}{\gamma_\infty - z_0} + \frac{\nu(\{\gamma_{-\infty}\})}{\gamma_{-\infty} - z_0}  = \frac{1}{z_0 - \gamma_\infty} + C(z_0),
	\]
	where $\gamma_{\pm \infty} = \lim_{n \rightarrow \pm \infty} \gamma^n.\infty \in S^1$.
	This is equivalent to
	\[
	f_\nu(z_0) = C(z_0) = \frac{\nu(\{\gamma_\infty\})}{\gamma_\infty - z_0} + \frac{\nu(\{\gamma_{-\infty}\})}{\gamma_{-\infty} - z_0}.
	\]
	But this implies that $\nu$ is exactly a convex combination of two delta-measures at $\gamma_\infty$ and $\gamma_{-\infty}$.
We can adapt the same strategy to obtain the following result.
\begin{theorem}
	\label{converse}
	Let $\mu$ be a probability measure on $G=PSU(1,1)$. Assume that the unit circle $(S^1, \nu)$ is a model for the Poisson boundary of $(G, \mu)$.	If $f_{\tilde{\nu}}$ satisfies \eqref{main equation, simp} for all $z$ in $\mathbb{D}$, then $\tilde{\nu} = \nu$.
\end{theorem}

\begin{proof}
	Fix $z_0 \in \mathbb{D}$ and observe that \eqref{main equation, simp} is equivalent to saying that 
	\begin{equation}
		\lambda_{z_0} : G \rightarrow \mathbb{C}, \quad \gamma \mapsto f_{\tilde{\nu}}(\gamma^{-1}.z_0)(\gamma^{-1})'(z_0) - \frac{1}{z_0 - \gamma.\infty}
	\end{equation}
	is a $\mu$-harmonic function on $G$:
	\[
	\begin{aligned}
		& \int_G  \left(f_{\tilde{\nu}}((\gamma h)^{-1}.z_0)((\gamma h)^{-1})'(z_0) - \frac{1}{z_0 - (\gamma h).\infty}\right) d\mu(h)  = \\
		&= \int_G  \left(f_{\tilde{\nu}}(h^{-1}.(\gamma^{-1}.z_0))(h^{-1})'(\gamma^{-1}.z_0) (\gamma^{-1})'(z_0) - \frac{1}{z_0 - (\gamma h).\infty}\right) d\mu(h) = \\
		&= \left( \int_G  f_{\tilde{\nu}}(h^{-1}.(\gamma^{-1}.z_0))(h^{-1})'(\gamma^{-1}.z_0) d\mu(h) \right) (\gamma^{-1})'(z_0) - \int_G \frac{d\mu(h)}{z_0 - (\gamma h).\infty} = \\
		&= f_{\tilde{\nu}}(\gamma^{-1}.z_0) (\gamma^{-1})'(z_0) + \int_G \frac{d\mu(h) (\gamma^{-1})'(z_0)}{\gamma^{-1}.z - h.\infty} - \int_G \frac{d\mu(h)}{z_0 - (\gamma h).\infty} = \\
		&= f_{\tilde{\nu}}(\gamma^{-1}.z_0) (\gamma^{-1})'(z_0) - \frac{1}{z_0 - \gamma.\infty}.
	\end{aligned}
	\] 
	This is a bounded function due to \eqref{bounded1}:
	\[
		|f_{\tilde{\nu}}(\gamma^{-1}.z_0)(\gamma^{-1})'(z_0)| = \left| \left( \frac{1}{2 \pi} \int_0^{2 \pi} \frac{d\tilde{\nu}(t)}{\gamma.e^{i t} - z_0}\right)  - \frac{1}{\gamma.\infty - z_0} \right| \le \frac{2}{1 - |z_0|}.
	\]
	Recall that a bounded harmonic function has limits $\nu$-a.s along sample paths $\gamma_n$ which converge to $S^1$. Let us pick such a path $\gamma_n \rightarrow \xi \in S^1$.
	Then, as in the above argument, we will end up with
	\[
	\lim\limits_{n \rightarrow \infty} \lambda_{z_0}(\gamma_n) = \frac{1}{\xi - z_0}.
	\]
	Combined with the Poisson representation for bounded harmonic functions, this yields $\lambda(id) = f_{\tilde{\nu}}(z_0) = \frac{1}{2 \pi} \int_0^{2 \pi} \frac{d \nu(t)}{e^{it} - z_0} = f_\nu(z_0)$.
\end{proof}

To formulate stronger results, we also need to consider holomorphic solutions to \eqref{main equation, simp} outside of the unit disk.

\begin{proposition}
	Let $\mu$ be a probability measure, and let $f(z)$ be a holomorphic solution to \eqref{main equation, simp} in $\mathbb{D}$. Then the function $g(z) := -\frac{\overline{f(\overline{z}^{-1})}}{z^2} - \frac{1}{z}, z \in \mathbb{D}_e$ is a holomorphic solution to \eqref{main equation, simp} outside of the unit disk for all $z \in \mathbb{D}_e$ such that the integral $\int_G \frac{d \mu(\gamma)}{z - \gamma.\infty}$ converges absolutely.
\end{proposition} 
\begin{proof}
	First of all, we observe that
	\begin{equation}
		\overline{(\gamma^{-1})'(\overline{z}^{-1})} = \frac{z^2}{(\gamma^{-1}.z)^2} (\gamma^{-1})'(z), \quad \gamma \in PSU(1,1), z \in \mathbb{D}.
	\end{equation}
	This follows from a direct computation:
	\[
		\overline{(\gamma^{-1})'(\overline{z}^{-1})} = \frac{1}{\overline{(-\overline{b} \overline{z}^{-1} + a)^2}} = \frac{z^2}{(-b + \overline{a}z)^2} = \frac{z^2}{(-\overline{b}z +a)^2} \frac{(-\overline{b}z +a)^2}{(\overline{a}z - b)^2} = \frac{z^2}{(\gamma^{-1}.z)^2} (\gamma^{-1})'(z).
	\]
	Using the above lemma, we rewrite \eqref{main equation, simp} as follows.
	\[
	\begin{aligned}
		& \int_{G} f(\gamma^{-1}.z)(\gamma^{-1})'(z) d\mu(\gamma) - f(z) = \int_G \frac{d \mu(\gamma)}{z - \gamma.\infty}, \quad |z| < 1 \iff \\ 
		& \iff \int_{G} f(\gamma^{-1}.\overline{z}^{-1})(\gamma^{-1})'(\overline{z}^{-1}) d\mu(\gamma) - f(\overline{z}^{-1}) = \int_G \frac{d \mu(\gamma)}{\overline{z}^{-1} - \gamma.\infty}, \quad |z| > 1, \iff \\
		& \iff \int_{G} \overline{f(\gamma^{-1}.\overline{z}^{-1})}\overline{(\gamma^{-1})'(\overline{z}^{-1})} d\mu(\gamma) - \overline{f(\overline{z}^{-1})} = \int_G \frac{d \mu(\gamma)}{z^{-1} - \overline{\gamma.\infty}}, \quad |z| > 1, \iff \\
		& \iff \int_{G} \overline{f(\overline{(\gamma^{-1}.z)^{-1}})} \frac{z^2}{(\gamma^{-1}.z)^2} (\gamma^{-1})'(z) d\mu(\gamma) - \overline{f(\overline{z}^{-1})} = \int_G \frac{d \mu(\gamma)}{z^{-1} - \overline{\gamma.\infty}}, \quad |z| > 1, \iff \\
		& \iff \int_{G} \frac{\overline{f(\overline{(\gamma^{-1}.z)^{-1}})}}{(\gamma^{-1}.z)^2} (\gamma^{-1})'(z) d\mu(\gamma) - \frac{\overline{f(\overline{z}^{-1})} }{z^2} = \int_G \frac{1}{z} \frac{d \mu(\gamma)}{1 - z \overline{\gamma.\infty}} , \quad |z| > 1, \iff \\
		& \iff \int_{G} \frac{\overline{f(\overline{(\gamma^{-1}.z)^{-1}})}}{(\gamma^{-1}.z)^2} (\gamma^{-1})'(z) d\mu(\gamma) - \frac{\overline{f(\overline{z}^{-1})} }{z^2} = \int_G \frac{d \mu(\gamma) \overline{\gamma.\infty}}{1 - z \overline{\gamma.\infty}} + \frac{1}{z}, \quad |z| > 1, \iff \\
		& \iff \int_{G} \frac{\overline{f(\overline{(\gamma^{-1}.z)^{-1}})}}{(\gamma^{-1}.z)^2} (\gamma^{-1})'(z) d\mu(\gamma) - \frac{\overline{f(\overline{z}^{-1})} }{z^2} = \int_G \frac{d \mu(\gamma)}{\gamma.0 - z} + \frac{1}{z}, \quad |z| > 1.
	\end{aligned}
	\]
	All that is left to observe is that $\frac{1}{z}$ is mapped to $\int_G \left( \frac{1}{z - \gamma.0} - \frac{1}{z - \gamma.\infty} \right) d \mu(\gamma) - \frac{1}{z}$. Therefore, we see that $g(z)$ does, indeed, solve \eqref{main equation, simp}. 
\end{proof}

\textbf{Remark.} Observe that the transform in the above proposition, indeed, maps $f_\nu(z)$ on $\mathbb{D}$ to $f_\nu(z)$ on $\mathbb{D}_e$. We will leave the explicit formulation of Theorem \ref{converse} outside of the unit disk, but we will formulate the following useful lemma which is, essentially, a part of the proof of Theorem \ref{converse}.

\begin{lemma}
	Let $\mu$ be a countably supported probability measure whose support generates a lattice $\Gamma \leq G$, and assume that $(S^1, \nu)$ is the model for the Poisson boundary of $(\Gamma, \mu)$. Denote the (bounded) $\mu$-harmonic function on $\Gamma$ represented by $\xi \mapsto \frac{1}{\xi - z_0}$ by $\lambda_{z_0}$ for $z_0 \in \overline{\mathbb{C}} \setminus \mathbb{T}$. Then we have
	\begin{equation}
		\label{harmonic representation}
		f_\nu(z_0) = \frac{\lambda_{z_0}(\gamma) + \frac{1}{z_0 - \gamma.\infty}}{(\gamma^{-1})'(z_0)}, \quad z_0 \in \overline{\mathbb{C}} \setminus \mathbb{T}.
	\end{equation}
\end{lemma}

We will refer to \eqref{harmonic representation} as the \textbf{harmonic representation} of the hitting measure's Cauchy transform.

\section{Squeezing water from a stone: a deep dive into \eqref{main equation, simp}}
\label{Squeezing water from a stone}
In this section we will explore the functional equation \eqref{main equation, simp} in much more detail. From now on, we will restrict ourselves to countably supported probability measures $\mu$, denoting by $\Gamma \le G = PSU(1,1)$ the subgroup generated by the support of $\mu$.

\begin{theorem}[Corollary \ref{C:main corollary}.1]
	\label{entire solutions}
	Let $\mu$ be a probability measure with finite support. Then there are no entire solutions to \eqref{main equation, simp}.
\end{theorem}
\begin{proof}
	Choose an element $\tau \in \text{supp}(\mu)$ which does not fix the origin. In particular, $\tau.\infty = (\overline{\tau.0})^{-1} \ne \infty$. Fix small enough contour $C_\tau$ around $\tau.\infty$. Integrating both sides over this contour, we get
	\[
	\begin{aligned}
		& \int_{C_\tau}\left( \int_\Gamma \frac{d\mu(\gamma)}{z - \gamma.\infty}  + \int_\Gamma f(\gamma^{-1}.z)(\gamma^{-1})'(z)  d\mu(\gamma) - f(z) \right) dz = \\ 
		&= \sum_{\gamma.\infty = \tau.\infty} \mu(\gamma) + \int_\Gamma \int_{\gamma^{-1}(C_\tau)} f(z) dz - \int_{C_\tau} f(z)  = 0
	\end{aligned}
	\]
	by applying the change of variables. As $f(z)$ is entire, the contour integrals vanish, leaving us with $\mu(\gamma) = 0$ for all $\gamma$ with the same pole as $\tau$, which leads to a contradiction.
\end{proof}

\begin{corollary}
	Let $\mu$ be a probability measure with finite support. Then $\limsup\limits_{k \rightarrow \infty} |a_k|^{1/k} > 0$ for every $\mu$-stationary measure with the Fourier series $\nu \sim \sum_{k \in \mathbb{Z}} a_k e^{i k t}$.
\end{corollary}
\begin{proof}
	Consider a $\mu$-stationary measure $\nu$ with $\limsup_{k \rightarrow \infty} |a_k|^{1/k} = 0$. Then $f_\nu(z)$ is an entire function which solves \eqref{main equation, simp} for $|z| < 1$. The LHS of \eqref{main equation, simp} can be analytically continued to a meromorphic function on $\mathbb{C}$, therefore, $f_\nu(z)$ solves \eqref{main equation, simp} for all $\mathbb{C}$. This allows us to apply Theorem \ref{entire solutions}, yielding a contradiction.
\end{proof}
\begin{theorem}[Corollary \ref{C:main corollary}.2]
	Let $\mu$ be a countably supported probability measure. If \\ $\limsup\limits_{n \rightarrow \infty} \left\| \sum_\Gamma \frac{\mu^{*n}(\gamma)}{z - \gamma.\infty} \right\|_1 = \infty$, then there are no $\mu$-stationary measures with $L^{1+\varepsilon}(S^1, Leb)$-density for any $\varepsilon > 0$.
\end{theorem}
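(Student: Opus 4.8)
The plan is to iterate the functional equation \eqref{main equation, simp} against the convolution powers of $\mu$ and then pit the contractivity of the associated composition operator on $H^1(\mathbb{D})$ against the hypothesised growth of the right-hand side. Write $T_\gamma(f) := (f\circ\gamma^{-1})\cdot(\gamma^{-1})'$ and $S_\mu := \sum_{\gamma}\mu(\gamma)\,T_\gamma$, so that \eqref{main equation, simp} reads $S_\mu f_\nu - f_\nu = g_\mu$ with $g_\mu(z):=\int_G \frac{d\mu(\gamma)}{z-\gamma.\infty}$. The chain rule gives $T_{\gamma_1\gamma_2}=T_{\gamma_1}T_{\gamma_2}$, hence $S_\mu^n = S_{\mu^{*n}}$ for every $n\ge 1$.

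First I would note that $\mu$-stationarity of $\nu$ is inherited by every convolution power: by associativity of the convolution of probability measures on $S^1$, $\mu*\nu=\nu$ implies $\mu^{*n}*\nu=\nu$. Since $\mu^{*n}$ is again countably supported and still has finite first moment (the displacement $\gamma\mapsto \log\frac{1+|\gamma.0|}{1-|\gamma.0|}$ is controlled by the hyperbolic distance $d_{\mathbb{H}^2}(0,\gamma.0)$, which is subadditive along products), Theorem \ref{T:Cauchy transform solves main eq} applies verbatim to $\mu^{*n}$ and yields
\[
S_\mu^n f_\nu - f_\nu \;=\; S_{\mu^{*n}}f_\nu - f_\nu \;=\; g_{\mu^{*n}} \qquad\text{on } \mathbb{D},
\]
where $g_{\mu^{*n}}(z)=\int_G \frac{d\mu^{*n}(\gamma)}{z-\gamma.\infty}$ is exactly the function appearing in the hypothesis.

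Now suppose, for contradiction, that some $\mu$-stationary $\nu$ had density $h\in L^{1+\varepsilon}(S^1,m)$ for some $\varepsilon>0$. Then $f_\nu$ is the Cauchy transform of the measure with density $e^{-it}h\in L^{1+\varepsilon}(S^1)$ (cf.\ the remark after Lemma \ref{T:Cauchy transform}), so by the $L^p$-boundedness of the Riesz projection ($p=1+\varepsilon>1$) we get $f_\nu\in H^{1+\varepsilon}(\mathbb{D})\subset H^1(\mathbb{D})$. By Theorem \ref{weighted composition ops} each $T_\gamma$ is an isometry of $H^1(\mathbb{D})$, and $\mu^{*n}$ is a probability measure, so $\sum_\gamma \mu^{*n}(\gamma)\,T_\gamma f_\nu$ converges absolutely in the Banach space $H^1(\mathbb{D})$, its sum agrees on $\mathbb{D}$ with $S_\mu^n f_\nu$ (norm convergence forces locally uniform convergence), and $\|S_\mu^n f_\nu\|_1\le \sum_\gamma \mu^{*n}(\gamma)\,\|f_\nu\|_1=\|f_\nu\|_1$. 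Hence $g_{\mu^{*n}}=S_\mu^n f_\nu - f_\nu\in H^1(\mathbb{D})$ with $\|g_{\mu^{*n}}\|_1\le 2\|f_\nu\|_1$ for all $n$, so $\limsup_{n\to\infty}\|g_{\mu^{*n}}\|_1\le 2\|f_\nu\|_1<\infty$, contradicting the assumption. Therefore no such $\nu$ exists.

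The argument is short once Theorem \ref{T:main result} is in hand, and the only steps that need care are bookkeeping ones. One must check that \eqref{main equation, simp}, which Theorem \ref{T:Cauchy transform solves main eq} establishes pointwise on $\mathbb{D}$, can legitimately be read as an identity of $H^1(\mathbb{D})$-functions — this is exactly why we need $f_\nu\in H^1$, hence $p>1$; the statement genuinely fails for $L^1$ density, where a Cauchy transform need not lie in $H^1$. One must also confirm that the convolution powers inherit the countable-support and finite-first-moment hypotheses, so that the results of Section \ref{The log-Poisson transform of a stationary measure} really do apply to each $\mu^{*n}$. I expect the upgrade of \eqref{main equation, simp} to an identity in $H^1$ to be the only point where a reader will want extra detail; conceptually there is no serious obstacle.
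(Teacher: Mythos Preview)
Your proof is correct and follows essentially the same route as the paper: establish $f_\nu\in H^1(\mathbb{D})$ via $L^p$-boundedness of the Cauchy transform for $p=1+\varepsilon>1$, use the $H^1$-isometry of each $T_\gamma$ to bound $\|S_{\mu^{*n}}f_\nu - f_\nu\|_1\le 2\|f_\nu\|_1$, and contradict the hypothesis by replacing $\mu$ with $\mu^{*n}$. Your write-up is in fact slightly more careful than the paper's (you verify the finite-first-moment and countable-support hypotheses pass to $\mu^{*n}$, and you invoke the Riesz projection rather than ``Fatou's theorem'' for $f_\nu\in H^{1+\varepsilon}$), and your semigroup identity $S_\mu^n=S_{\mu^{*n}}$ is a pleasant but inessential extra, since you ultimately apply Theorem~\ref{T:Cauchy transform solves main eq} directly to $\mu^{*n}$ anyway.
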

\begin{proof}
	Let $\nu$ be $\mu$-stationary with density in $L^{1+\varepsilon}(S^1)$. Due to Fatou's theorem we know that $f_\nu(z) \in H^{1 + \varepsilon}(\mathbb{D})$. In particular, $f_\nu(z) \in H^1(\mathbb{D})$. As all composition operators in LHS of \eqref{main equation, simp} are isometries, we can see that the $H^1$-norm of LHS is at most $2||f||_1$. Make note of the fact that this application of the triangle inequality does not depend on $\mu$ at all. Applying the $H^1$-norm to both sides, we get
	\[
	2 \left\| f_\nu \right\|_1 \ge \left\| \sum_\Gamma \frac{\mu(\gamma)}{z - \gamma.\infty} \right\|_1.
	\]
	However, keep in mind that any $\mu$-stationary measure is $\mu^{*n}$-stationary, therefore, WLOG one can replace $\mu$ with $\mu^{*n}$ in the above inequality without changing LHS. This would imply
	\[
	2 \left\| f_\nu \right\|_1 \ge \limsup\limits_{n \rightarrow \infty} \left\| \sum_\Gamma \frac{\mu^{*n}(\gamma)}{z - \gamma.\infty} \right\|_1 = \infty,
	\]
	which leads to a contradiction.
\end{proof}


\begin{example}
	Consider $\mu = \delta_\gamma$ for a non-elliptic $\gamma \in PSU(1,1)$. Then the $H^1$-norm of $\frac{1}{z - \gamma^n.\infty}$ goes to infinity as $n \rightarrow \infty$, so there are no absolutely continuous measures with densities in $L^{1 + \varepsilon}(S^1)$, as we expected.
\end{example}

However, as simple as this criterion seems, given a measure $\mu$ supported on a lattice in $PSU(1,1)$, it is not at all easy to estimate $\left\| \sum_\Gamma \frac{\mu^{*n}(\gamma)}{z - \gamma.\infty} \right\|_1$, therefore, a potential argument should rely on a very precise analysis of how non-uniformly the poles will be distributed in small neighbourhoods of $\mathbb{T}$.

Finally, the proof of Theorem \ref{converse} can be adapted to show the following theorem.

\begin{theorem}
	Let $\Gamma \leq G$ be a lattice equipped with a probability measure $\mu$. Assume that $(S^1, \nu)$ is the Poisson boundary for $(\Gamma, \mu)$. Consider the (unique) $\mu$-harmonic function $\lambda : \Gamma \rightarrow \mathbb{C}$ which converges to the identity function on $\mathbb{T}$. TFAE:
	\begin{itemize}
		\item  
		$
		\sup\limits_{\gamma \in \Gamma} \left| \frac{\lambda(\gamma) - \gamma.0}{(\gamma^{-1})'(0)} \right| < \infty.
		$
		\item $f_\nu(z) \in H^\infty(\mathbb{D})$.
	\end{itemize}
	In particular,
	\begin{itemize}
		\item The above supremum is finite implies that the hitting measure is absolutely continuous and the density belongs to $L^p$ for all $1 \le p < \infty$
		\item The above supremum is infinite implies that the hitting measure cannot have a continuous density.
	\end{itemize}
\end{theorem}
\begin{proof}
	\indent
	Before we start the proof, we consider the function
	\[
		\lambda_0: \gamma \mapsto f_\nu(\gamma^{-1}.0) (\gamma^{-1})'(0) + \frac{1}{\gamma.\infty} = f_\nu(\gamma^{-1}.0) (\gamma^{-1})'(0) + \overline{\gamma.0}.
	\]
	From the proof of Theorem \ref{converse} we see that this is a bounded harmonic function with the boundary representing function being $\xi \mapsto \overline{\xi}$. Therefore, $\lambda_0 = \overline{\lambda}$, and we have
		\[
	f_\nu(\gamma^{-1}.0) = \dfrac{\overline{\lambda}(\gamma) - \overline{\gamma.0}}{(\gamma^{-1})'(0)}, \quad \gamma \in \Gamma.
	\]
	Now all we need is to recall that $\Gamma.z_0$ is non-tangentially dense in $\mathbb{D}$ and does not have interior limit points due to the fact that $\Gamma$ is a lattice. As Cauchy transforms have non-tangential limits $Leb$-almost everywhere, uniform boundedness on the orbit will force $f_\nu$ to be in $H^\infty(\mathbb{D})$. See \cite[Theorem 3]{brownsums} for a related stronger statement.
\end{proof}

\subsection{Functional-analytic necessary condition for existence of absolutely continuous stationary measures}

In this subsection we treat LHS of \eqref{main equation, simp} as a bounded operator: define

\[
T_\mu: H^p(\mathbb{D}) \rightarrow H^p(\mathbb{D}), \quad T_\mu(f)(z) := \sum_{\gamma} \mu(\gamma) f(\gamma^{-1}.z)(\gamma^{-1})'(z) - f(z).
\]

It is well-known that $T_\mu$ is a bounded operator for all $0 < p \le \infty$, and in such generality, not much else is known about $T_\mu$. If $p > 1$, we recall that $(H^p)^* \cong H^q$, for $\frac{1}{p} + \frac{1}{q} = 1$, and we can at least explicitly compute its adjoint $T^*_\mu : H^q \rightarrow H^q$.

\begin{proposition}
	Consider $V_\gamma(f)(z) = (f \circ \gamma^{-1})(\gamma^{-1})'(z)$ as a bounded operator $H^p \rightarrow H^p$. Then
	\[
	V^*_\gamma(f)(z) = S^*(f(\gamma.z) \gamma.z), \quad f \in H^q(\mathbb{D}),
	\]
	where $S^*$ stands for the backwards shift $S^*(g)(z) = \frac{g(z) - g(0)}{z}$.
\end{proposition}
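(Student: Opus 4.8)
The plan is to realize $T^{*}_{\gamma}$ through the standard boundary duality of Hardy spaces and then turn a change of variables on $\mathbb{T}$ into the stated formula. First I would fix the pairing that identifies $(H^{p})^{*}\cong H^{q}$ for $1<p<\infty$, namely
\[
[f,g]:=\frac{1}{2\pi}\int_{0}^{2\pi}f(e^{it})\,\overline{g(e^{it})}\,dt=\sum_{n\ge 0}\hat f(n)\,\overline{\hat g(n)},
\]
which is well defined by Fatou's theorem and Hölder's inequality and nondegenerate by the F. and M. Riesz theorem. With this pairing $T^{*}_{\gamma}$ is determined by $[T_{\gamma}f,g]=[f,T^{*}_{\gamma}g]$ for all $f\in H^{p}$, $g\in H^{q}$, and since polynomials are dense in $H^{p}$ it suffices to verify the identity for $f$ a polynomial, where every boundary manipulation below is literal.

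Next comes the computation. Writing $z=e^{it}$ and substituting $z=\gamma(\zeta)$ — an orientation-preserving real-analytic diffeomorphism of $\mathbb{T}$ — the chain rule gives $(\gamma^{-1})'(z)\,\gamma'(\zeta)=1$, while the arc-length element transforms as $dt=\dfrac{\zeta\,\gamma'(\zeta)}{\gamma(\zeta)}\,ds$, the factor $\dfrac{\zeta\gamma'(\zeta)}{\gamma(\zeta)}$ being real and positive on $\mathbb{T}$ (it equals $|\gamma'(\zeta)|$, i.e. the derivative of the boundary angle). Hence
\[
(\gamma^{-1})'(z)\,dt=\frac{1}{\gamma'(\zeta)}\cdot\frac{\zeta\,\gamma'(\zeta)}{\gamma(\zeta)}\,ds=\frac{\zeta}{\gamma(\zeta)}\,ds=\zeta\,\overline{\gamma(\zeta)}\,ds,
\]
where the last step uses $|\gamma(\zeta)|=1$ on $\mathbb{T}$, so $1/\gamma(\zeta)=\overline{\gamma(\zeta)}$. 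Substituting into $[T_{\gamma}f,g]=\frac{1}{2\pi}\int_{0}^{2\pi}f(\gamma^{-1}(z))(\gamma^{-1})'(z)\overline{g(z)}\,dt$ and absorbing the conjugated factors, this becomes
\[
[T_{\gamma}f,g]=\frac{1}{2\pi}\int_{0}^{2\pi}\zeta\,f(\zeta)\;\overline{g(\gamma(\zeta))\,\gamma(\zeta)}\,ds=\bigl[\,z\,f(z),\ g(\gamma(z))\,\gamma(z)\,\bigr].
\]
Note that $g\circ\gamma\in H^{q}$ (composition with an automorphism of $\mathbb{D}$ is bounded on $H^{q}$ by the Littlewood subordination principle, cf. Theorem \ref{weighted composition ops}) and $\gamma\in H^{\infty}$, so the second argument lies in $H^{q}$.

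Finally, under $[\cdot,\cdot]$ the Banach-space adjoint of multiplication by $z$ (the forward shift $S$) is exactly the backward shift $S^{*}(h)(z)=\dfrac{h(z)-h(0)}{z}$: comparing Fourier coefficients, $[zf,h]=\sum_{m\ge 0}\hat f(m)\,\overline{\hat h(m+1)}=[f,S^{*}h]$. Applying this with $h(z)=g(\gamma(z))\gamma(z)$ gives $[T_{\gamma}f,g]=\bigl[\,f,\ S^{*}\!\bigl(g(\gamma(z))\,\gamma(z)\bigr)\bigr]$ for all $f\in H^{p}$, whence $T^{*}_{\gamma}(g)(z)=S^{*}\!\bigl(g(\gamma.z)\,\gamma.z\bigr)$, which is the asserted formula. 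The main obstacle is purely one of bookkeeping: carrying the complex conjugates correctly through the change of variables and pinning down the positivity of the Jacobian $\zeta\gamma'(\zeta)/\gamma(\zeta)$ on $\mathbb{T}$ together with the reflection identity $1/\gamma(\zeta)=\overline{\gamma(\zeta)}$ there. A convenient alternative that avoids the boundary integral is to test the identity on the kernels $f=(u-z)^{-1}$, $|u|>1$, using the explicit action of $T_{\gamma}$ from Theorem \ref{weighted composition ops} and a residue computation, and then extend by density and by boundedness of $T^{*}_{\gamma}$ on $H^{q}$.
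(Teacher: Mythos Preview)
Your proof is correct and proceeds by a genuinely different route than the paper's. The paper's argument never touches the boundary integral directly: it evaluates $T^{*}_{\gamma}f$ pointwise by testing against the reproducing kernels $\tfrac{1}{a-z}$, using the identity $\bigl\langle f,\tfrac{1}{a-z}\bigr\rangle=\tfrac{f(\overline{a}^{-1})}{\overline{a}}$ together with the explicit action $T_{\gamma}\bigl(\tfrac{1}{a-z}\bigr)=\tfrac{1}{\gamma.a-z}-\tfrac{1}{\gamma.\infty-z}$ from Theorem~\ref{weighted composition ops}, and then applies the reflection $\overline{a}^{-1}\mapsto w$ via \eqref{from inside to outside}. (This is exactly the ``convenient alternative'' you sketch in your last sentence.) Your approach instead performs the change of variables $z=\gamma(\zeta)$ on $\mathbb{T}$ and tracks the Jacobian, landing on $[T_{\gamma}f,g]=[zf,\,g(\gamma z)\gamma z]$; the backward shift then appears transparently as the adjoint of multiplication by $z$. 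Your computation has the advantage of being self-contained---it does not rely on Theorem~\ref{weighted composition ops} and makes the origin of $S^{*}$ explicit---while the paper's kernel argument ties the result more directly to the pole identity \eqref{poles of the log derivative} and to the structure that drives the rest of Section~\ref{Squeezing water from a stone}.
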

\begin{proof}
	As in many similar computations (see \cite[Theorem 2]{cowen1988linear} for an example), we use the reproducing kernel property of $\frac{1}{1 - \overline{a}z}$: for any $f \in H^p$
	\[
	\left\langle f(z), \frac{1}{1 - \overline{a}z} \right\rangle := \frac{1}{2\pi} \int_{0}^{2 \pi} \frac{f(e^{it}) dt}{1 - a \overline{z}} = f(a).
 	\]
 	A slight modification yields
 	\[
 	\left\langle f(z), \frac{1}{a - z} \right\rangle = \frac{f(\overline{a}^{-1})}{\overline{a}}.
 	\]
 	As we know how $T_\gamma$ acts on $\frac{1}{a - z}$, reflexivity of $H^p$ for all $1 < p < \infty$ allows us to write
 	\[
 	\begin{aligned}
 		\frac{V^*_\gamma f(\overline{a}^{-1})}{\overline{a}} &= \left\langle (V_\gamma^*)f(z), \frac{1}{a - z} \right\rangle =  \left\langle f(z), \frac{1}{\gamma.a - z} - \frac{1}{\gamma.\infty - z} \right\rangle = \\ 
 		&=\frac{f(\overline{\gamma.a}^{-1})}{\overline{\gamma.a}} - \frac{f(\overline{\gamma.\infty}^{-1})}{\overline{\gamma.\infty}} \stackrel{\eqref{from inside to outside}}{=} \gamma.\overline{a^{-1}} f(\gamma.\overline{a^{-1}}) - \gamma.0 f(\gamma.0).
 	\end{aligned}
 	\]
 	Replacing $\overline{a}^{-1}$ with $\omega$, we get
 	\[
 	V^*_\gamma f(w) = \frac{\gamma.w f(\gamma.w) - \gamma.0 f(\gamma.0)}{w} = S^*(f(\gamma.w) \gamma.w).
 	\]
\end{proof}

As a quick corollary, we get that

\begin{equation}
	\label{adjoint of LHS}
	T_\mu^*(f)(z) = S^*\left( \sum_{\gamma} \mu(\gamma) f(\gamma.z) \gamma.z\right)  - f(z).
\end{equation}

\begin{theorem}
	\label{Blaschke and orthogonals}
	Let $\mu$ satisfy the Blaschke condition, and let $1 < p < \infty$. Consider a $\mu$-stationary measure $\nu$ with $L^p$-density. Then the following statements are equivalent.
	\begin{itemize}
		\item The Cauchy transform $f_\nu$ solves \eqref{main equation, simp},
		\item $f_\nu(z) \in (\overline{T_\mu^*(B_\mu H^q)})^{\perp}$,
	\end{itemize}
	where $B_\mu(z)$ is the Blaschke product vanishing on the support of $\mu$.
\end{theorem}

\begin{proof}
	Let $T_\mu(f) = \sum_{\gamma} \frac{\mu(\gamma)}{z - \gamma.\infty}$. It is easy to see that $\sum_{\gamma} \frac{\mu(\gamma)}{z - \gamma.\infty}$ is a linear combination of reproducing kernels $\frac{1}{1 - \overline{\gamma.0}z}$. In particular, $\sum_{\gamma} \frac{\mu(\gamma)}{z - \gamma.\infty} \in (B_\mu H^q)^\perp$. Therefore,
	\[
	0 = \left\langle T_\mu(f), B_\mu H^q \right\rangle = \left\langle f, \overline{T_\mu^*(B_\mu H^q)} \right\rangle.
	\]
	
	To prove the reverse implication, we use the structure of $S^*$-invariant subspaces in $H^p$ to show 
	$$
	T_\mu(f_\nu) = \sum_{\gamma \in \text{supp} \, \mu} \frac{a_{\gamma}}{1 - \overline{\gamma.0} z} := \sum_{\gamma \in \text{supp} \, \mu} \frac{b_{\gamma}}{\gamma.\infty - z},
	$$
	and then we can use the residue theorem to show that $b_\gamma = \mu(\gamma)$, as the Blaschke condition guarantees that all poles are, indeed, isolated.
\end{proof}

This proves Corollary \ref{functional-analytic necessary condition}.
\subsection{When the Lebesgue measure is stationary?}

Earlier we have reproved the well-known theorem of J. Bourgain that the Lebesgue measure cannot be a stationary measure if $\mu$ has finite support. To understand this case better, we need to look at \eqref{main equation, simp} and observe that $f_\nu$ vanishes, leaving us with vanishing of the following \textbf{Borel series}.
\begin{equation}
	\label{Lebesgue equation}
	\sum_{\gamma} \frac{\mu(\gamma)}{z - \gamma.\infty} = 0, \quad |z| < 1.
\end{equation}
This immediately proves Corollary \ref{C:main corollary}.1. At first glance, it might seem counter-intuitive that the above sum can vanish on the entire disc, but let us recall the following fundamental fact about Borel series.

\begin{theorem}[\cite{brownsums}, Theorem 3]
	\label{non-tangential equiv rep}
	Let $A = \{ z_n \} \subset \mathbb{D}$ be a sequence of points \textbf{inside} the unit disk without interior limit points. Then there exists a sequence $\{c_n\} \in l^1$ such that
	\[
	\sum_n \frac{c_n}{z - z_n} = 0, \quad |z| > 1
	\]
	if and only if almost every point in $S^1$ is a non-tangential limit of a subsequence in $\{z_n\}$.
\end{theorem}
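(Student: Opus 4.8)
The plan is to translate the vanishing of the Borel sum into a boundary statement about an $H^1$-function and then analyse that function. Write $P_w(\xi):=\frac{1-|w|^2}{|\xi-w|^2}$ for $w\in\mathbb D$, $\xi\in\mathbb T$, let $P[\varphi]$ denote the Poisson integral of $\varphi\in L^1(\mathbb T)$, let $\phi_a$ be the disc automorphism interchanging $0$ and $a$, and set $H^1_0:=\{g\in H^1(\mathbb D):g(0)=0\}$, which on the boundary is exactly the set of $g\in L^1(\mathbb T)$ with Fourier spectrum in $\{1,2,\dots\}$. Expanding $\frac1{z-z_n}=\sum_{k\ge0}z_n^k z^{-k-1}$ for $|z|>1$ and using $\sum_n|c_n|<\infty$, the condition $\sum_n c_n/(z-z_n)=0$ on $\{|z|>1\}$ is equivalent to $\sum_n c_n z_n^k=0$ for all $k\ge0$, hence --- by density of polynomials in $A(\mathbb D)$, and then by applying the relation to $f(r\,\cdot)$ and letting $r\to1^{-}$ (dominated convergence with dominating sequence $\|f\|_\infty\,|c_n|$) --- to $\sum_n c_n f(z_n)=0$ for every $f\in H^\infty(\mathbb D)$. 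Using the Poisson formula $f(z_n)=\int_{\mathbb T}P_{z_n}f^{*}\,dm$ (valid since $|f^{*}|\le\|f\|_\infty$) and interchanging sum and integral, this reads $\int_{\mathbb T}h\,f^{*}\,dm=0$ for all $f\in H^\infty$, where $h:=\sum_n c_n P_{z_n}\in L^1(\mathbb T)$ is the balayage of $\sum_n c_n\delta_{z_n}$; since the pre-annihilator of $H^\infty$ in $L^1$ is $H^1_0$ (F. and M. Riesz duality), the assertion to be proved becomes: \emph{there is $0\ne c\in\ell^1$ with $\sum_n c_n P_{z_n}\in H^1_0$ iff a.e.\ $\xi\in\mathbb T$ is a non-tangential limit of a subsequence of $\{z_n\}$.}

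For necessity I would first run a finite-Blaschke test: fix $n_0$ with $c_{n_0}\ne0$, let $B_N$ be the Blaschke product with zeros $\{z_n:n\le N,\,n\ne n_0,\,c_n\ne0\}$, pick $\omega_N\in\mathbb T$ with $\omega_N B_N(z_{n_0})\ge0$, and plug $f=\omega_N B_N\in H^\infty$, $\|f\|_\infty=1$, into $\sum_n c_n f(z_n)=0$; since the contribution of the excluded indices is $\le\sum_{n>N}|c_n|\to0$, one gets $|c_{n_0}|\,|B_N(z_{n_0})|\to0$, so $\prod_{n\ne n_0,\,c_n\ne0}|\phi_{z_{n_0}}(z_n)|=0$, i.e.\ $\sum_{n:\,c_n\ne0}(1-|z_n|)=\infty$; restricting $c$ to its support we may assume all $c_n\ne0$ (so $\{z_n\}$ violates the Blaschke condition). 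Next, $g(z)=\sum_n c_n/(z-z_n)$ is holomorphic off $\overline{\{z_n\}}$ and vanishes on $\{|z|>1\}$; if the boundary accumulation set $L$ of $\{z_n\}$ were a proper subset of $\mathbb T$ the domain $\overline{\mathbb C}\setminus\overline{\{z_n\}}$ would be connected, forcing $g\equiv0$ and every residue $c_n=0$ --- so $L=\mathbb T$ and $\{z_n\}$ is dense in $\mathbb T$. To sharpen ``dense'' to ``non-tangentially dense'' I return to $h$: replacing $c$ by the shifted datum $c^{(j)}=(c_nz_n^j)_n$ (still in $\ell^1$, still satisfying the moment condition) and noting that if every $h^{(j)}=\sum_n c_nz_n^jP_{z_n}$ vanished then $\sum_n c_nz_n^j\bar z_n^k=0$ for all $j,k\ge0$, whence $\sum_n c_n\delta_{z_n}\perp C(\overline{\mathbb D})$ by Stone--Weierstrass and $c=0$, we may after one such shift assume $h\in H^1_0\setminus\{0\}$; then $h^{*}(\xi)\ne0$ for a.e.\ $\xi$, because a nonzero $H^1$-function has $\log|h^{*}|\in L^1$. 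The remaining step --- and the one I expect to be the main obstacle --- is the localization: if $\{z_n\}$ approaches $\xi$ only tangentially, i.e.\ $|\xi-z_n|/(1-|z_n|)\to\infty$, then $h^{*}(\xi)=\lim_{z\to\xi}P[h](z)=0$. This is the Brown--Shields--Zeller estimate, and it rests on bounding the $\ell^1$-mass the $z_n$ inside a fixed Stolz angle at $\xi$ can carry, via a comparison of $\sup_n$ over cones with the non-tangential maximal function of $P[h]$.

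For sufficiency, assume a.e.\ $\xi$ is non-tangentially approached. Then $\{z_n\}$ is a uniqueness set for bounded harmonic functions: if $u=P[\varphi]$, $\varphi\in L^\infty$, vanishes on every $z_n$, then choosing for a.e.\ $\xi$ a subsequence $z_{n_k}\to\xi$ non-tangentially gives $\varphi(\xi)=\lim_k u(z_{n_k})=0$ (Fatou), so $\varphi=0$; equivalently $S\colon\ell^1\to L^1(\mathbb T)$, $S(c)=\sum_n c_nP_{z_n}$, has dense range. Dense range does not by itself produce $0\ne c$ with $S(c)\in H^1_0$, since $S$ has non-closed range (Poisson kernels at points clustering to one boundary point are nearly parallel), so this has to be built by hand, and I expect it to be the dual counterpart of the hard step above: one uses non-tangential density to show $\{z_n\}$ is a dominating set for $H^\infty$ (so $\sup_n|f(z_n)|=\|f\|_\infty$), dualizes to obtain a nonzero $\ell^1$-measure on $\{z_n\}$ annihilating $H^\infty$, i.e.\ constructs the desired element of $H^1_0$ as a convergent balayage $\sum_n c_nP_{z_n}$ by a scale-by-scale greedy selection that at each stage exploits the abundance of $z_n$ in Stolz angles over a full-measure set of directions.
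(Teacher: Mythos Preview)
The paper does not give its own proof of this theorem: it is quoted from Brown--Shields--Zeller, and the only argument the paper records is a brief outline of the \emph{sufficiency} direction (non-tangential density $\Rightarrow$ existence of a nontrivial $\ell^1$ Borel relation). That outline is: consider the restriction operator $T\colon H^\infty(\mathbb D)\to\ell^\infty$, $T(f)_n=f(z_n)$; show its image is weak-$*$ closed; show $\operatorname{dist}(\delta_{n_0},T(H^\infty))=\tfrac12$ for a fixed index $n_0$; apply Hahn--Banach in the predual $\ell^1$ to produce the annihilator. Your sufficiency sketch is the same argument in different clothing --- ``$\{z_n\}$ is dominating, dualize'' is exactly what makes $T$ an isometric embedding with weak-$*$ closed proper range, after which Hahn--Banach does the work. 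So on this direction you and the paper agree; your additional remark about a ``scale-by-scale greedy selection'' is unnecessary and not how the argument actually runs.

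For \emph{necessity} the paper says nothing, so there is nothing to compare against; but your proposed route has a genuine gap, and you have correctly located it. The reformulation $\sum_n c_n z_n^k=0\ (k\ge0)\iff h:=\sum_n c_n P_{z_n}\in H^1_0$ is fine, the Blaschke test and the Stone--Weierstrass shifting trick to guarantee $h\not\equiv0$ are both correct, and ``$h\in H^1_0\setminus\{0\}\Rightarrow h^*\ne0$ a.e.'' is standard. The problem is the localization step: you assert that if no subsequence of $\{z_n\}$ approaches $\xi$ non-tangentially then $h^*(\xi)=0$, but this does not follow from anything you have written. The boundary value $h^*(\xi)$ is the non-tangential limit of the holomorphic extension of $h$, not the pointwise sum $\sum_n c_n P_{z_n}(\xi)$; even granting a.e.\ pointwise convergence, a tangential cluster of $z_n$ near $\xi$ with small $(1-|z_n|)$ can still contribute nontrivially to $P_{z_n}(\xi)$, and nothing in the hypothesis bounds the corresponding $|c_n|$. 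Your parenthetical about ``bounding the $\ell^1$-mass the $z_n$ inside a fixed Stolz angle can carry'' is pointing in the wrong direction --- the issue is precisely the mass \emph{outside} every Stolz angle. This is the substantive half of the Brown--Shields--Zeller argument, and it does not reduce to a soft maximal-function comparison; you would need to supply an actual estimate here for the proof to close.
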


This theorem almost gives what we want, however, the above theorem gives series with poles \textbf{inside} the disk which vanishes \textbf{outside} of it, whereas we need the opposite: Borel series with poles \textbf{outside} the disk and vanishing \textbf{inside} the disk.

One can easily mitigate this by considering the change of variables $z \mapsto 1/z$:
\[
\begin{aligned}
	\sum_{\gamma} \frac{\mu(\gamma)}{z^{-1} - \gamma.\infty} &= \sum_{\gamma} \frac{\mu(\gamma) z}{1 - (\gamma.\infty) z} = \sum_{\gamma} \frac{(\gamma.\infty)^{-1}  \mu(\gamma) z}{(\gamma.\infty)^{-1} -  z} = \\ 
	&= \sum_{\gamma} - \frac{\mu(\gamma)}{\gamma.\infty} + \frac{\mu(\gamma)}{(\gamma.\infty)^2} \frac{1}{(\gamma.\infty)^{-1} - z}.
\end{aligned}
\]
However, as we can plug in $z = 0$ in \eqref{Lebesgue equation}, we get that
\[
\sum_{\gamma} \frac{\mu(\gamma)}{z^{-1} - \gamma.\infty} = \sum_{\gamma}\frac{\mu(\gamma)}{(\gamma.\infty)^2} \frac{1}{(\gamma.\infty)^{-1} - z} = 0
\]
for all $|z| > 1$. Applying the Brown-Shields-Zeller theorem, we obtain Corollary \ref{intro: Lebesgue is stationary}.3.

\textbf{Remark.} Recall that the orbit of a point with respect to an action of a discrete subgroup of $PSU(1,1)$ is non-tangentially dense if and only if the subgroup is of the first type. Therefore, Theorem \ref{non-tangential equiv rep} confirms that $\Gamma \subset PSU(1,1)$ being a first-kind Fuchsian group should be a necessary condition for a Furstenberg measure on $\Gamma$ to exist.

Moreover, due to another theorem of Beurling, referring to \cite[Corollary 4.2.24]{Shapiro1968}:

\begin{theorem}[\cite{beurling1934fonctions}, \cite{beurling1989collected}]
	Let $\{ z_n \}$ be a sequence of points \textbf{outside} of the unit disk with $|z_n| \downarrow 1$. If $\limsup\limits_{n \rightarrow \infty} |c_n|^{1/n} < 1$ and
	\[
	\sum_n \frac{c_n}{z - z_n} = 0, \quad |z| < 1,
	\]
	then all $c_n = 0$.
\end{theorem}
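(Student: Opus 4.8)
The plan is to turn the vanishing of $f(z)=\sum_n c_n/(z-z_n)$ on $\mathbb{D}$ into the vanishing, in a neighbourhood of $\infty$, of the \emph{same} meromorphic function, and then let the identity theorem do the rest. First I would record that $\limsup_n|c_n|^{1/n}<1$ gives a $\sigma<1$ and a $C$ with $|c_n|\le C\sigma^n$; in particular $\sum_n|c_n|<\infty$, so $f(z):=\sum_n \tfrac{c_n}{z-z_n}$ is (absolutely) convergent off $\{z_n\}$. Since $|z_n|\downarrow 1$, the sequence $\{z_n\}$ has no accumulation point in $\{|z|>1\}$, so $f$ is meromorphic on $\overline{\mathbb{C}}\setminus\mathbb{T}$ with simple poles exactly at the $z_n$ (residue $c_n$), holomorphic and vanishing at $\infty$, and by hypothesis $f\equiv0$ on $\mathbb{D}$. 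The region $\{|z|>1\}\setminus\{z_n\}$ is connected and contains a neighbourhood of $\infty$; hence, \emph{once one shows $f\equiv 0$ near $\infty$}, the identity theorem forces $f\equiv 0$ on all of $\{|z|>1\}\setminus\{z_n\}$, so $f$ has no poles there, so every residue $c_n$ vanishes. So the whole statement reduces to proving that the Taylor coefficients of $f$ at $\infty$, namely $A_j:=\sum_n c_n z_n^{\,j}$ for $j\ge 0$ (from $f(z)=\sum_{j\ge0}A_j z^{-j-1}$ on $|z|>|z_1|$), all vanish.

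From $f\equiv 0$ on $\mathbb{D}$, expanding at $0$ gives the ``moment relations'' $\sum_n c_n z_n^{-k}=0$ for every $k\ge1$; writing $w_n:=1/z_n$ (so $|w_n|=1/|z_n|\uparrow 1$ and $|w_n|\ge 1/|z_1|>0$) this reads
\[
\sum_n c_n w_n^{\,k}=0,\qquad k\ge 1 .
\]
I would also note the elementary consequence of $|z_n|\downarrow 1$ that $1-|w_n|\to 0$, so the Cesàro averages of this null sequence satisfy $\sum_{n\le N}(1-|w_n|)=o(N)$; in particular $\prod_{n\le N}|w_n|=e^{-o(N)}$, and likewise, for any fixed $r<1/|z_1|$, $\inf_{|w|=r}|B_N(w)|=e^{-o(N)}$ for the finite Blaschke product $B_N$ with zeros $\{w_n:n\le N\}$ (because $\log\frac{1-|w_n|r}{|w_n|-r}\to 0$, so $\sum_{n\le N}\bigl|\log\frac{1-|w_n|r}{|w_n|-r}\bigr|=O\bigl(\sum_{n\le N}(1-|w_n|)\bigr)=o(N)$).

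Now fix $j_0\ge 0$ and set $r:=\tfrac12/|z_1|$. For each $N$ let $Q_N$ be the degree-$j_0$ Taylor polynomial of $B_N(0)/B_N(\cdot)$ at $0$, and put
\[
\beta_N(w):=w^{-j_0}\Bigl(1-\tfrac{B_N(w)Q_N(w)}{B_N(0)}\Bigr).
\]
By the choice of $Q_N$ the bracket vanishes to order $\ge j_0+1$ at $0$, so $\beta_N$ is holomorphic on $\overline{\mathbb{D}}$, with $\beta_N(0)=0$ and $\beta_N(w_n)=w_n^{-j_0}=z_n^{\,j_0}$ for $n\le N$. The key estimate is $\|\beta_N\|_{H^\infty(\mathbb{D})}=e^{o(N)}$: away from $0$ one bounds $|1-B_NQ_N/B_N(0)|\le 1+\|Q_N\|_\infty/|B_N(0)|$, where both $1/|B_N(0)|=1/\prod_{n\le N}|w_n|$ and the coefficients of $Q_N$ (Cauchy-estimated by $\sup_{|w|=r}|B_N(0)/B_N(w)|$) are $e^{o(N)}$ by the previous paragraph; near $0$ one divides the bracket by $w^{j_0+1}$ and applies the maximum principle. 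Since $\beta_N$ is holomorphic past $\mathbb{T}$, its Taylor coefficients decay geometrically, so Fubini is legitimate and, using $\beta_N(0)=0$ together with the moment relations,
\[
\sum_n c_n\,\beta_N(w_n)=\sum_{k\ge 0}(\beta_N)_k\Bigl(\sum_n c_n w_n^{\,k}\Bigr)=0 .
\]
Splitting at $N$ and using $\beta_N(w_n)=z_n^{\,j_0}$ for $n\le N$ together with $w_n\in\mathbb{D}$ for $n>N$,
\[
\Bigl|\sum_{n\le N}c_n z_n^{\,j_0}\Bigr|=\Bigl|\sum_{n>N}c_n\beta_N(w_n)\Bigr|\le\|\beta_N\|_\infty\sum_{n>N}|c_n|\le e^{o(N)}\cdot\frac{C\sigma^{N+1}}{1-\sigma},
\]
which tends to $0$ as $N\to\infty$, while the left-hand side tends to $|A_{j_0}|$ since $\sum_n|c_n||z_n|^{j_0}<\infty$. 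Hence $A_{j_0}=0$ for every $j_0\ge 0$, and the first paragraph concludes the proof.

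The main obstacle is precisely the bound $\|\beta_N\|_\infty=e^{o(N)}$, and this is where both hypotheses are indispensable: $|z_n|\downarrow 1$ is used only through $\sum_{n\le N}(1-|w_n|)=o(N)$, which prevents the finite Blaschke interpolant from growing geometrically in $N$, while $\limsup_n|c_n|^{1/n}<1$ supplies the genuine geometric factor that overcomes the surviving $e^{o(N)}$. If one drops the decay hypothesis the interpolant norm can outrun $\sum_{n>N}|c_n|$ — and it must, precisely when the Blaschke condition fails, since that is exactly the regime of the Brown--Shields--Zeller examples in which the conclusion is false; so no argument of this type can avoid using the geometric decay in an essential way.
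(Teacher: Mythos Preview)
The paper does not supply a proof of this theorem: it is quoted from Beurling's work (with a pointer to \cite[Corollary 4.2.24]{Shapiro1968}) and then immediately applied to deduce Corollary~\ref{intro: Lebesgue is stationary}.2. So there is no ``paper's own proof'' to compare against; I can only assess your argument on its own merits.

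Your proof is correct. The reduction to showing $A_{j_0}=\sum_n c_n z_n^{j_0}=0$ for all $j_0\ge 0$ is clean, and the interpolation device $\beta_N$ does exactly what you claim: it is holomorphic on $\overline{\mathbb{D}}$ (the only pole candidates of $B_NQ_N/B_N(0)$ lie at $1/\overline{w_n}$, outside $\overline{\mathbb{D}}$, and the $w^{-j_0}$ factor is neutralised by your choice of $Q_N$), satisfies $\beta_N(0)=0$ and $\beta_N(w_n)=z_n^{j_0}$ for $n\le N$, and the sup-norm bound $\|\beta_N\|_\infty=e^{o(N)}$ follows by maximum principle on $\mathbb{T}$ together with the two Ces\`aro estimates you record. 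The Fubini step is legitimate because $\beta_N$ is holomorphic on a disk of radius $|z_N|>1$, so its Taylor coefficients are summable; then the moment relations kill the double sum, and the tail bound $e^{o(N)}\sigma^{N}\to 0$ finishes.

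Two small remarks. First, your ``near $0$'' clause in the norm estimate is redundant: once you know $\beta_N$ is holomorphic on $\overline{\mathbb{D}}$, the maximum principle reduces everything to $\mathbb{T}$, where $|w^{-j_0}|=|B_N(w)|=1$ and your bound on $Q_N$ applies directly. Second, strictly speaking the final residue argument uses that the $z_n$ are distinct, which is guaranteed here since $|z_n|$ is strictly decreasing; if one allows repetitions the conclusion would only be that the residue at each pole---the sum of the $c_n$ over coinciding $z_n$---vanishes. Neither point affects the validity of your argument in the stated setting.
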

Applying this theorem to $z_n = \gamma_n.\infty$ (relative to a suitable enumeration of $\Gamma$), we get that a Fuchsian group of first kind $\Gamma \subset PSU(1,1)$ admits a Furstenberg measure only if
\[
\limsup\limits_{n \rightarrow \infty} |\mu(\gamma_n)|^{1/n} = 1,
\] 
thus proving Corollary \ref{intro: Lebesgue is stationary}.2. Combined with the exponential growth of Fuchsian groups, this condition implies that a Furstenberg measure $\mu$ cannot have a double-exponential moment with respect to the hyperbolic distance: if we let $c > 0$, then
\[
\sum_{n} \mu(\gamma_n) e^{e^{c d(0, \gamma_n.0)}} < \infty \iff \sum_{n} \mu(\gamma_n) e^{c n} < \infty \Rightarrow \limsup\limits_{n \rightarrow \infty} |\mu(\gamma_n)|^{1/n} < e^{-c} < 1.
\]
As for the strongest known moment conditions: it is known that Jialun Li's counterexample, given in the Appendix of \cite{10.1215/00127094-2020-0058}, provides a Furstenberg measure with an exponential moment, our approach shows that a Furstenberg measure cannot have a double-exponential moment. It is widely believed for cocompact lattices that there is a Furstenberg measure with a superexponential moment, but we are not aware of a complete and self-contained argument being published. Moreover, we would like to mention several existing results related to decay of the coefficients in Borel series.
\begin{itemize}
	\item Due to Denjoy, \cite{denjoy}, there exist $c_n$ and $z_n \in \mathbb{D}$, with $|c_n| \le k e^{-n^{1/2} - \varepsilon}$ such that
	\[
		\sum_n \frac{c_n}{z - z_n} = 0, \quad |z| > 1.
	\]
	\item Another example due to Beurling, \cite{beurvol1}: there exist $c_n$ and $z_n \in \mathbb{D}$, with $|c_n| \le k e^{-\frac{n}{\log(n)^2}}$ such that
	\[
		\sum_n \frac{c_n}{z - z_n} = 0, \quad |z| > 1.
	\]
	\item Finally, due to Leont'eva \cite{leonteva} we have a very strong result: for any function which is holomorphic in a \textbf{closed} disk $\overline{\mathbb{D}}$ there exist $|z_n| > 1$ and $A_n$ with $|A_n| \le C e^{-n^{1 - \varepsilon_n}}$ for some sequence $\varepsilon_n \rightarrow 0$ such that
	\[
		\sum_n \frac{A_n}{z - z_n} = f(z), \quad |z| < 1.
	\]
\end{itemize}
Unfortunately, none of these results provide control on the positions of the poles $z_n$, but at least this suggests another heuristic for the possibility of a Furstenberg measure with a superexponential moment with respect to the hyperbolic distance.

\section{Criterion for the singularity of the harmonic measure}
\label{Criterion for the singularity of the harmonic measure}
The following theorem is an immediate corollary of the Fatou's theorem:
\begin{theorem}
	\label{first equiv}
	For a finite complex Borel measure on $\mathbb{T}$ the following statements are equivalent.
	\begin{enumerate}
		\item The hitting measure $\mu$ is singular.
		\item The exterior ($|z| > 1$) part of $f_\nu(z)$ is a \textbf{pseudocontinuation} of the inner $(|z| < 1)$ part $f_\nu(z)$. In other words, the non-tangential limits exist and coincide $Leb$-a.s.
		\item The harmonic (wrt to the Euclidean/hyperbolic Laplacian) function
		\[
		h(z) := \frac{1}{2\pi} \int_0^{2\pi} \frac{1 - |z|^2}{|e^{it} - z|^2} d\nu(t)
		\]
		has $Leb$-a.e. vanishing non-tangential limits at the unit circle.
	\end{enumerate}
\end{theorem}

As we care about discrete subgroups, we would like to make use of both the second and the third criteria via \eqref{harmonic representation}.

\begin{theorem}
	Let $\mu$ be a probability measure on a discrete subgroup $\Gamma$ of $PSU(1,1)$, and assume that $(S^1, \mu)$ is the model for the Poisson boundary of $(\Gamma, \mu)$. Then the following statements are equivalent.
	\begin{enumerate}
		\item For $Leb$-almost all $\xi \in \mathbb{T}$ we have
		\[
		\lim_{r \rightarrow 1^-} \lim_{n \rightarrow \infty} \sum_{\gamma} \mu^{* n}(\gamma) \frac{1 - r^2}{|\gamma(0) - r\xi|^2} = 0.
		\]
		\item There exist $|z| < 1$ and $|w| > 1$ such that for $Leb$-a.s. $\xi \in S^1$ the following non-tangential limits exist and are equal to each other:
		\[
		\angle \lim\limits_{\gamma.z \rightarrow \xi} \frac{\lambda_z(\gamma) - \frac{1}{z - \gamma.\infty}}{(\gamma^{-1})'(z)} = \angle \lim\limits_{\gamma.w \rightarrow \xi} \frac{\lambda_w(\gamma) - \frac{1}{w - \gamma.\infty}}{(\gamma^{-1})'(w)}.
		\]
		\item For every $|w| > 1$ and the following non-tangential limits exist and are equal to each other for $Leb$-a.s. $\xi \in S^1$:
		\[
		\angle \lim\limits_{\gamma.0 \rightarrow \xi} \frac{\lambda_0(\gamma) - \overline{\gamma.0}}{(\gamma^{-1})'(0)} = \angle \lim\limits_{\gamma.w \rightarrow \xi} \frac{\lambda_w(\gamma) - \frac{1}{w - \gamma.\infty}}{(\gamma^{-1})'(w)}.
		\]
	\end{enumerate}
\end{theorem}

\begin{proof}
	(2) is equivalent to Theorem \ref{first equiv}'s (1) because $\mu^{*n} \xrightarrow{wk^*} \nu$. (2) and (3) are equivalent to Theorem \ref{first equiv}'s (3) because of the harmonic representation:
	\[
	\frac{\lambda_z(\gamma) - \frac{1}{z - \gamma.\infty}}{(\gamma^{-1})'(z)} = f_\nu(\gamma^{-1}.z), \quad \frac{\lambda_w(\gamma) - \frac{1}{w - \gamma.\infty}}{(\gamma^{-1})'(w)} = f_\nu(\gamma^{-1}.w),
	\]
	and the above identities are equivalent to the equality between the inner and outer non-tangential limits of $f_\nu$ almost everywhere with respect to the Lebesgue measure.
\end{proof}

The above theorem suggests that one should be looking for intrinsic (with respect to a lattice) construction of harmonic functions, which would allow us to control the convergence to be able to estimate the above ratios effectively.

\section{Open questions}
\begin{itemize}
	\item It is easy to see from the proof of Corollary \ref{C:main corollary}.1 that we actually get non-existence of solutions $f(z) = \sum a_{k+1} z^k$ to \eqref{main equation, simp} with $\limsup_{n \rightarrow \infty} |a_k|^{1/k} < \varepsilon$ for some small $\varepsilon$, as only one preimage of the chosen contour explodes, so we can bound the radius of the convergence of the solution. Ideally, one would like to show that for finitely supported $\mu$ every solution of \eqref{main equation, simp} has radius of convergence exactly $1$. Keep in mind that this result would almost close the smoothness gap: it is known that absolutely continuous densities stationary with respect to finitely supported measures can belong to $C^n(S^1)$ for any $n > 1$. 
	
	The Douglas-Shields-Shapiro theorem implies that any holomorphic function with the radius of convergence exceeding $1$ is either cyclic with respect to the backward shift or rational. It is reasonable to assume that \eqref{main equation, simp} only has rational solutions when $\mu$ is supported on a single element, and we conjecture that former never happens.
	\item The Brown-Shields-Zeller theorem has an unexpected consequence -- it requires the poles to be non-tangentially dense \textbf{almost} everywhere on $S^1$. Therefore, even if $\Gamma$ is a non-cocompact lattice, there will be a sequence $(a_\gamma) \in l^1(\Gamma)$ such that
	\[
	\sum \frac{a_\gamma}{z - \gamma.\infty} = 0, \quad |z| < 1.
	\]
	However, due to \cite{guivarch1990} we know that the Lebesgue measure is not stationary with respect to any $\mu$ with finite first moment. Therefore, either Theorem \ref{T:main result} is not a criterion, $(a_\gamma)$ does not have the first finite moment, or there is a complex-valued Furstenberg measure -- keep in mind that Guivarch'-le Jan's methods only apply for probability measures $\mu$.
	\item Cauchy transforms were used to study affine self-similar measures on $\mathbb{C}$ in \cite{lund1998cauchy}. However, the paper was focused on studying measures supported on fractals with the Hausdorff dimension $\alpha> 1$, which forces the Cauchy transforms to be bounded and Holder with exponent $\alpha - 1$ (see \cite[Theorem 2.1(b)]{lund1998cauchy}). It should be possible to characterize the Hausdorff dimension of hitting measures using a similar self-similarity condition for the non-tangential limit $f_\nu(e^{it})$, but we expect the Hausdorff dimensions be strictly smaller than $1$, and we are not aware of any existing ideas in this direction.
	\item Another difficulty which stands in our way of resolving the singularity conjecture is the fact that the operator $T_\mu: f \mapsto \sum \mu(\gamma) f \circ (\gamma^{-1}) (\gamma^{-1})'$ does not commute with the backward shift $S^* : f \mapsto \frac{f(z) - f(0)}{z}$. There is a very tempting approach that consists of using the structure of $S^*$-invariant subspaces of $H^p$ as follows:
	\begin{enumerate}
		\item The first and more manageable step is to show that for finitely supported measures $\mu$ on a lattice $\Gamma$ there are no \textbf{non-cyclic} solutions $f \in (\varphi H^p)^\perp$, for $p > 1$. The key idea is to use the Douglas-Shields-Shapiro theorem and show that the pseudocontinuation of $f$ also solves \eqref{main equation, simp}, but also has to have non-trivial poles. We can flip this solution to get a solution with singularities inside the unit disk, and then, using the action of $\Gamma$, show that poles would not satisfy the Blaschke condition, contradicting the fact that $\varphi$ is inner.
		\item Carefully dealing with solutions $f \in H^1(\mathbb{D}) \setminus H^{1 + \varepsilon}(\mathbb{D})$, we would be able to show that either the harmonic measure is singular, or its Cauchy transform is cyclic with respect to the backward shift in $H^p$ for all $0 < p < \infty$.
		\item The most difficult part is eliminating the second possibility. There are multiple ways to potentially get a breakthrough.
		\begin{itemize}
			\item It is not difficult to see that the iterates $\sum \frac{\mu(\gamma)}{z - \gamma.\infty}$ converge to $f_\mu(z)$ on compact subsets away from the poles and $\mathbb{T}$. However, we would like to show that this convergence holds in all $H^p(\mathbb{D})$, $p < 1$, and we would like to use it to approximate $f_\nu(z)$ by finite convex combinations of simple poles $\frac{1}{z - \xi}$, $\xi \in \mathbb{T}$ in $H^p$ as well. Then we would force $f_\nu \in H^p \cap \overline{H^p_0}$ by Aleksandrov, and Fatou's theorem would yield singularity. The big problem with this approach is that it is notoriously tricky to estimate $L^p$-distances for $p < 1$, there are very few tools available to us, unless we switch to the upper half-plane $\{\text{Im} \, z > 0\}$, where we can try to borrow tools used to study real-variable Hardy spaces and Hardy distributions. This is, essentially, the core part of Aleksandrov's argument in \cite{Ale79}, which we were not able to adapt to our problem.
			\item The second approach is to somehow identify a non-trivial closed $S^*$-invariant subspace of $H^p$ which contains $\sum \frac{\mu(\gamma)}{z - \gamma.\infty}$, with its preimage with respect to $T_\mu$ also being a non-trivial closed $S^*$-invariant subspace. Once again, this would eliminate the option of $f_\nu$ being cyclic, delivering singularity. As we mentioned before, the operator $T_\mu$ does not commute with the backward shift, making this approach very tricky to implement as well. It is also not clear how to reconstruct a suitable pseudocontinuation for a potential $f$ even if we know that $T_\mu(f)$ has one.
		\end{itemize}
	\end{enumerate}
\end{itemize}

\bibliographystyle{alpha}
\bibliography{fourier_singular}
\Addresses
\end{document}